\newtheorem{theorem}{Theorem}[section]
\newtheorem{corollary}[theorem]{Corollary}
\newtheorem{lemma}[theorem]{Lemma}
\newtheorem{proposition}[theorem]{Proposition}
\newenvironment{proof}[1][Proof]{\noindent\textbf{#1.} }{\ \rule{0.5em}{0.5em}}
\begin{document}
\title{The number of geometrically distinct reversible closed geodesics on a Finsler sphere with $K\equiv 1$\thanks{Supported by NSFC (no. 11771331) and NSF of Beijing (no. 1182006)}}
\author{Ming Xu\\
\\
School of Mathematical Sciences\\
Capital Normal University\\
Beijing 100048, P. R. China\\
Email:mgmgmgxu@163.com
\\
}
\date{}
\maketitle

\begin{abstract}
In this paper we study the Finsler sphere $(S^n,F)$ with $n>1$, which has constant flag
curvature $K\equiv 1$ and only finite prime closed geodesics. In this
case, the connected isometry group $I_0(S^n,F)$ must be
a torus which dimension satisfies $0<\dim I(S^n,F)
\leq[\frac{n+1}{2}]$. We will prove that the number of geometrically distinct reversible closed geodesics
on $(S^n,F)$ is at least $\dim I(S^n,F)$. When
$\dim I_0(S^n,F)=[\frac{n+1}{2}]$, the equality happens, and there are exactly $2[\frac{n+1}{2}]$ prime closed geodesics, which verifies Anosov conjecture in this special case.

\textbf{Mathematics Subject Classification (2000)}: 22E46, 53C22, 53C60.

\textbf{Key words}: Finsler sphere, constant flag curvature, closed geodesic, totally geodesic sub-manifold, fixed point set, equivariant Morse theory.

\end{abstract}
\section{Introduction}
Finsler sphere with constant flag curvature $K\equiv 1$ is an important subject in Finsler geometry. In the Randers case, D. Bao, Z. Shen and C. Robles provided a complete classification \cite{BRS2004}. Meanwhile,
R. L. Bryant used contact geometry and complex geometry to construct a new family of Finsler metrics on spheres with $K\equiv 1$
\cite{Br1996}\cite{Br1997}\cite{Br2002}.
The navigation process with a Killing vector field can provide new examples from old ones
\cite{HM2015}. People guess there would be more, but new examples are
very hard to be found. On the other hand, R. L. Bryant, L. Huang and X. Mo have constructed many local examples with constant flag curvature \cite{BHM2017}.

The geodesics on a Finsler
sphere $S^n$ with $K\equiv 1$ could be much more complicated than in Riemannian geometry. Recently,
R. L. Bryant, P. Foulon, S. Ivanov, V. S. Matveev and W. Ziller purposed the idea of classifying Finsler spheres with $K\equiv 1$ up to conjugation, i.e. the behavior of geodesics. In particular, they provided a classification when $n=2$
\cite{BFIMZ2017}.

The key observation in \cite{BFIMZ2017} is the following. On a Finsler sphere $(S^n,F)$ with $n>1$ and $K\equiv 1$, there exists a unique Clifford Wolf translation $\psi$ moving each point with a distance $\pi$ \cite{Sh1997}. Because it is generalized from the antipodal map for a standard Riemannian sphere, we will simply call $\psi$ the {\it antipodal map}. Further more, if there exist only finite prime closed geodesics, (after taking the closure) we can use this $\psi$ to "generate" a continuous family of isometries.

In this paper, we will use this observation to estimate the number of prime closed geodesics on a Finsler sphere $S^n$ with $n>1$ and $K\equiv 1$. The usual tools for studying this problem are variational method, index theory and infinitely dimensional
Morse theory. There are many literatures in this field, for example \cite{Du2016}\cite{DL2009}\cite{Ra1989}\cite{Wa2015}. However, when the manifold has constant flag curvature, the above observation from \cite{BFIMZ2017} brings Lie theory to help us simply the steps and bring more results.

For example, the Anosov conjecture claims on a Finsler sphere $(S^n,F)$,
there exists at least $2[\frac{n+1}{2}]$ prime closed geodesics \cite{An1975}. It has been proven in some special cases, for
example, when $n$ is small or the metric is bumpy and satisfies certain pinch condition for its flag curvature \cite{BL2010}\cite{Wa2012}.

It is natural to ask if the Anosov conjecture is true for a
Finsler sphere $S^n$ with $K\equiv 1$. When $n=2$, the answer
is obviously yes. Moreover,
Theorem 2 in \cite{BFIMZ2017} indicates the number of prime
closed geodesics on a Finsler sphere $S^2$ with $K\equiv 1$ is
either 2 or infinity. When it is 2, the two prime closed geodesics
are the same curve with different directions.
On the other hand, when $n>2$, we do not know the answer. This is the key problem we will pursue in this paper. Notice that in this case, the pinch condition for the flag curvature is always satisfied, but we can not easily deduce the bumpy condition for each closed geodesic, i.e. we can not apply Theorem 1.2 in \cite{Wa2012} to this case. So we switch to Lie theory for help and borrow some techniques from \cite{BFIMZ2017} to show to what extent the Anosov conjecture
is correct on a Finsler sphere $S^n$ with $K\equiv 1$.

Assume $(M,F)=(S^n,F)$ with $n>1$ is a Finsler sphere with $K\equiv 1$ and only finite prime closed geodesics.
We will first prove some fundamental properties of its geometry. For example, each prime closed geodesic has no self intersection. Two closed geodesics intersect only when they are {\it geometrically} the same, i.e. they are the same subsets of $M$. But notice that they may have different multiples or directions. If two geodesics are geometrically the same but have different directions, we call them {\it reversible}. Most importantly, we have the following fundamental theorem for the
identity component $I_0(M,F)$ of the isometry group $I(M,F)$ (later we will simply call $I_0(M,F)$ the {\it connected isometry group}).

\begin{theorem}\label{main-thm-0}
Let $(M,F)=(S^n,F)$ with $n>1$ be a Finsler sphere with $K\equiv 1$ and only finitely many prime closed geodesics.
Then $I_0(M,F)$ is a torus which dimension satisfies $0<\dim I(M,F)\leq [\frac{n+1}{2}]$.
\end{theorem}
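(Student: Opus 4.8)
The plan is to combine three ingredients: the general structure theory of isometry groups, the central antipodal map $\psi$, and the interaction between one-parameter subgroups of isometries and the finitely many closed geodesics. First I would record that $I(M,F)$ is a compact Lie group (the isometry group of a compact Finsler manifold), so that $G:=I_0(M,F)$ is a compact connected Lie group; write $\mathfrak{g}=\mathrm{Lie}(G)$. Since isometries carry closed geodesics to closed geodesics preserving length and primeness, $G$ acts on the finite set of prime closed geodesics, and as $G$ is connected this action is trivial; hence every $g\in G$ fixes each prime closed geodesic setwise. Consequently each Killing field $X\in\mathfrak{g}$ is tangent to every prime closed geodesic $\gamma$, and being a Jacobi field along $\gamma$ that is everywhere tangent, periodicity forces $X|_\gamma=c_X\dot\gamma$ for a constant $c_X$. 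This tangency is the basic structural fact I expect to reuse.

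Next, positivity of the dimension. The antipodal map $\psi$ is a Clifford--Wolf translation of displacement $\pi$, and by its uniqueness it is centralized by all of $I(M,F)$: any conjugate $g\psi g^{-1}$ is again a displacement-$\pi$ Clifford--Wolf translation, hence equals $\psi$. Therefore the identity component $T:=\overline{\langle\psi\rangle}_0$ of the closure of $\langle\psi\rangle$ is a \emph{central} torus in $G$. I claim $T$ is nontrivial. If instead $\psi$ had finite order $k$, then concatenating the displacing geodesic from $p$ to $\psi(p)$ with its successive $\psi$-translates would close up, through every point $p$, into a closed geodesic of length $k\pi$; these would be finitely many by hypothesis, so their union is a finite union of circles and cannot cover the $n$-manifold $M$ with $n>1$, a contradiction. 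Hence $\psi$ has infinite order, $T$ is positive-dimensional, and $0<\dim G$.

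The crux is to show $G$ is abelian, and hence a torus. Suppose not; then $\mathfrak{g}$ contains a copy of $\mathfrak{su}(2)$, giving a connected subgroup $H\cong SU(2)$ or $SO(3)$ acting effectively and isometrically on $M$. Because $H$ is perfect it admits no nontrivial homomorphism to the rotation group $SO(2)$ of a circle; since $H$ preserves each prime closed geodesic $\gamma\cong S^1$ setwise, the induced action on $\gamma$ is trivial, so $H$ fixes every prime closed geodesic pointwise. Equivalently, every prime closed geodesic lies in $N:=\mathrm{Fix}(H)$, a proper closed totally geodesic submanifold which, as a fixed point set of isometries in a sphere with $K\equiv1$, is again a lower-dimensional Finsler sphere with $K\equiv1$ and only finitely many closed geodesics, and which is invariant under the central torus $T$. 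The main obstacle is to convert ``all closed geodesics lie in $N$'' into a contradiction: because the prime closed geodesics are sparse (a finite union of circles), one cannot simply produce a closed geodesic off $N$ through a generic point. I would instead invoke equivariant Morse theory on the free loop space, using the $H$-action to force critical orbits of the energy functional that meet $M\setminus N$ and thereby yield closed geodesics not fixed by $H$; alternatively, I would analyze the $T$-equivariant normal geodesic flow of $N$, where $K\equiv1$ makes every geodesic issuing normally from $N$ return to $N$ at the antipode, to detect closed geodesics transverse to $N$. Either route contradicts $H$ fixing all closed geodesics pointwise, so $G$ must be abelian and therefore a torus.

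Finally, the upper bound is the assertion that a torus acting effectively on $S^n$ has dimension at most $[\frac{n+1}{2}]$, the extremal case being the maximal torus of $SO(n+1)$. I would prove it by induction through circle subgroups: for $S^1\subseteq G$ with nonempty fixed set, $\mathrm{Fix}(S^1)$ is a totally geodesic sphere $S^{n'}$ with $n-n'$ even, since the circle acts without nonzero fixed vectors on the normal spaces, and $G/S^1$ acts effectively up to a finite kernel on $S^{n'}$; this gives $\dim G-1\le[\frac{n'+1}{2}]\le[\frac{n-1}{2}]$, whence $\dim G\le[\frac{n+1}{2}]$, with the case of almost free circle subgroups handled by the fibration bound. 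I expect the abelian step to be the genuine difficulty, the positivity and the dimension bound being comparatively routine once the central antipodal map and the standard torus-on-sphere theory are in place.
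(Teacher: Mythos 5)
Your overall architecture matches the paper's for two of the three parts: positivity via the infinite order of the central antipodal map $\psi$, and the observation that a semisimple subgroup, being perfect, must act trivially on each invariant closed geodesic (the paper phrases this as: the one-dimensional representation $X\mapsto\rho_{X,c}$ of $\mathfrak{g}$ defined by $X|_c=\rho_{X,c}\dot c$ restricts trivially to the semisimple factor). But there is a genuine gap exactly where you flag ``the main obstacle'': you never actually derive a contradiction from the statement that a nontrivial connected subgroup $H$ fixes every closed geodesic pointwise. The two strategies you gesture at (equivariant Morse theory on the free loop space; analysis of the $T$-equivariant normal geodesic flow of $N=\mathrm{Fix}(H)$) are left entirely unexecuted, and neither is needed. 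The missing idea is elementary: take a nonzero Killing field $X$ in $\mathrm{Lie}(H)$ generating a circle subgroup, and consider $f(\cdot)=F(X(\cdot))^2$. At a maximum point $x$ of $f$ one has $X(x)\neq 0$ (else $f\equiv 0$ and $X\equiv 0$), and by the Deng--Xu criterion (Lemma \ref{lemma-2-1}: a point where $X\neq 0$ is critical for $f$ iff the integral curve of $X$ through it is a geodesic) the $X$-orbit of $x$ is a closed geodesic on which $X$ is nonvanishing --- contradicting the assumption that $X$ vanishes on all closed geodesics. This is the paper's Lemma \ref{lemma-3-6}, and it is the engine of the whole proof; without it your abelian step does not close.

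The same device also powers the paper's upper bound, where your route diverges and is again only sketched. The paper picks the closed geodesic $c$ produced by the maximum argument, so that $\mathfrak{g}_c=\{X:\rho_{X,c}=0\}$ has codimension one in $\mathfrak{g}$; the subgroup $G'$ it generates fixes $c$ pointwise, hence acts effectively and $g^F_{\dot c(0)}$-orthogonally on the $(n-1)$-dimensional complement $\mathbf{V}\subset T_{c(0)}M$, giving $\dim I_0(M,F)=\dim G'+1\leq\mathrm{rank}\,SO(n-1)+1=[\frac{n+1}{2}]$ in one step. Your proposed induction through circle subgroups and their fixed-point spheres could in principle work, but the case of a circle with empty fixed set is dismissed with the phrase ``handled by the fibration bound'' rather than an argument, whereas the paper's version requires no case analysis. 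I would recommend replacing both of your unfinished steps with the single $F(X)^2$-maximum lemma.
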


Then we can introduce our main theorems in this paper.

\begin{theorem}\label{main-thm-1}
Let $(M,F)=(S^n,F)$ with $n>1$ be a Finsler sphere with $K\equiv 1$ and only finite prime closed geodesics. Then the number of geometrically distinct reversible closed geodesics is at least $\dim I(M,F)$.
\end{theorem}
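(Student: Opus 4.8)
The plan is to produce the required reversible closed geodesics as one-dimensional fixed point sets of the torus action, exploiting two observations. First, since $T:=I_0(M,F)$ is connected and there are only finitely many prime closed geodesics, $T$ permutes this finite set and therefore fixes each prime closed geodesic \emph{setwise}; hence every prime closed geodesic is a $T$-invariant subset of $M$. Second — and this is the mechanism that yields \emph{reversibility} — if $N\subset M$ is a closed one-dimensional totally geodesic submanifold, then $N$ is a reversible closed geodesic: on a one-dimensional Finsler manifold the geodesic equation is trivially satisfied in either orientation, so both orientations of $N$ are geodesics of $N$, and being totally geodesic they are geodesics of $M$. Since the fixed point set of any subgroup $H\subseteq T$ is totally geodesic, \emph{every closed one-dimensional component of $\mathrm{Fix}(H)$ is a reversible closed geodesic}. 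These are the candidates, and by the stated fundamental property that distinct prime closed geodesics are disjoint, counting geometrically distinct reversible closed geodesics amounts to counting distinct such circles.

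The next step is to extract $r:=\dim T$ of these circles from the action. First I would pass to the linear model: the $T$-action on $S^n$ is conjugate to the restriction of a linear torus action on $\mathbb{R}^{n+1}$ (this is where the constant-curvature hypothesis enters; alternatively one works intrinsically with the nested totally geodesic subspheres $\mathrm{Fix}(H)$). Decomposing $\mathbb{R}^{n+1}=V_0\oplus V_1\oplus\cdots\oplus V_k$ into the fixed space $V_0$ and two-dimensional weight spaces $V_i$ with nonzero weights $\alpha_i\in\mathfrak{t}^*$, effectiveness forces the $\alpha_i$ to span $\mathfrak{t}^*$, so $k\geq r$. Each totally geodesic subsphere $\mathrm{Fix}(H)\cap S^n=S\bigl(V_0\oplus\bigoplus_{\alpha_i|_{\mathfrak h}=0}V_i\bigr)$ is again a Finsler sphere with $K\equiv 1$, with only finitely many prime closed geodesics and a residual torus of strictly smaller dimension. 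This sets up an induction on $n$ (equivalently on $r$): choosing a flag of subtori $T_1\subset\cdots\subset T_r=T$ one peels off, at each stage, a totally geodesic fixed subsphere whose residual torus has dimension one less, bottoming out at the unit circles $S(V_i)$, the sought one-dimensional totally geodesic components.

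To organize the count cleanly I would invoke \emph{equivariant Morse theory}. Lifting to the free loop space $\Lambda M$ with its natural $S^1\times T$ action (the $S^1$ factor rotating loops, $T$ acting by isometries), closed geodesics are the critical orbits of the energy functional, and the $T$-fixed critical loops are precisely the $T$-invariant geodesics isolated above. Localizing the Morse inequalities to the fixed point set of the $T$-action and matching Betti numbers against the critical circles should yield at least $r$ geometrically distinct critical circles that are totally geodesic, hence reversible. Disjointness of distinct prime closed geodesics guarantees these are genuinely distinct, and in the extremal case $r=[\frac{n+1}{2}]$ the weights form a basis, $k=r$, and the count is exactly $r$ reversible circles, i.e.\ $2[\frac{n+1}{2}]$ prime closed geodesics, recovering the equality statement.

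The main obstacle I anticipate is the count rather than the reversibility, which is essentially formal once a circle is known to be totally geodesic. Concretely, the difficulty is twofold: guaranteeing that the candidate circles $S(V_i)$ are genuinely \emph{geodesics} — this is exactly where non-reversibility of $F$ could obstruct a merely $T$-invariant circle from being totally geodesic — and controlling the bookkeeping of the induction, namely checking that each totally geodesic fixed subsphere inherits $K\equiv 1$, retains only finitely many prime closed geodesics, and carries a residual torus of dimension exactly one smaller, so that the inductive count accumulates to $r$. The equivariant Morse-theoretic localization is what I would rely on to make this accumulation rigorous and to exclude degenerate coincidences among the circles.
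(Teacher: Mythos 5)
Your two starting observations are exactly the ones the paper uses: $I_0(M,F)$ preserves each of the finitely many prime closed geodesics, fixed point sets of isometries are totally geodesic, and a closed connected one-dimensional totally geodesic submanifold is automatically a reversible closed geodesic (this is how reversibility enters in the paper as well, via Lemma \ref{lemma-3-4}). The induction on dimension through totally geodesic fixed subspheres, each again a Finsler sphere with $K\equiv 1$ and finitely many prime closed geodesics, is also the paper's skeleton. But the middle of your argument has a genuine gap. Passing to a ``linear model'' of the torus action is unjustified (a smooth torus action on $S^n$ need not be conjugate to a linear one, and nothing identifies the totally geodesic fixed subspheres of $F$ with round subspheres $S(V_0\oplus\bigoplus V_i)$), and even granting it, the weight bookkeeping does not deliver $r$ one-dimensional fixed point sets: the fixed point set of a subtorus $H$ is $S(V_0\oplus\bigoplus_{\alpha_i|_{\mathfrak h}=0}V_i)$, which is one-dimensional only under strong genericity assumptions on the weights that you have not established, and $\mathrm{Fix}(T)=S(V_0)$ itself is constrained (Lemma \ref{lemma-3-7} shows it is at most a single reversible geodesic or a two-point $\psi$-orbit). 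The count of $r$ distinct circles therefore does not follow from $k\geq r$.

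The missing idea is the antipodal map $\psi$. The hardest case of the induction is when the chosen Killing field $X$ is nonvanishing everywhere, so that no element $\exp tX$ of the torus has any fixed points and your mechanism produces nothing. The paper resolves this by using isometries of the form $\phi=\psi\exp tX$, which can fix a closed geodesic pointwise even when $X$ does not vanish on it; showing that some such $\phi$ is not the identity requires a case analysis of the periods $t_i$ and the $\psi$-phases $t_i'$ of the closed geodesics, with the degenerate case (all periods and phases equal) excluded by showing $F(X(\cdot))^2$ would then be constant, making every integral curve of $X$ a closed geodesic, contradicting finiteness. The counting is then done not by equivariant Morse theory on the loop space (which would in any case count closed geodesics, not reversible ones) but by elementary linear algebra on the subspaces $\mathfrak g_i=\{X:\rho_{X,\gamma_i}=0\}$, each of codimension at most one with $\bigcap_i\mathfrak g_i=0$: one picks $X\in\bigcap_{i\leq m-1}\mathfrak g_i$ to get $m-1$ reversible geodesics inductively, then $X\in\mathfrak g_m\setminus\bigcup_{i\leq m-1}\mathfrak g_i$ to produce an $m$-th one distinct from the others. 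You would need to supply both the $\psi$-trick and this linear-algebraic selection to close your argument.
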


\begin{theorem}\label{main-thm-2}
Let $(M,F)=(S^n,F)$ with $n>1$ be a Finsler sphere with $K\equiv 1$ and only finite prime closed geodesics. If $\dim I_0(M,F)=[\frac{n+1}{2}]$, then each
closed geodesic on $M$ is reversible and the number of geometrically distinct closed geodesics is exactly
$\dim I(M,F)$.
\end{theorem}

Combining Theorem \ref{main-thm-0} and Theorem \ref{main-thm-1}, we can easily get the following corollary.

\begin{corollary}On a Finsler sphere with constant flag curvature and only finite prime closed geodesics, there exists at least one reversible closed geodesic.
\end{corollary}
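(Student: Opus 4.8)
The plan is to obtain the corollary as an immediate consequence of the two theorems just stated, with essentially no additional argument. First I would invoke Theorem \ref{main-thm-0}, which guarantees that the connected isometry group $I_0(M,F)$ is a torus of \emph{strictly} positive dimension, so that $\dim I(M,F)\geq 1$. Here I would note that $\dim I(M,F)=\dim I_0(M,F)$, since a Lie group and its identity component have the same dimension; thus the lower bound $0<\dim I(M,F)$ from Theorem \ref{main-thm-0} really is a statement about the full isometry group.

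Next I would apply Theorem \ref{main-thm-1}, which states that the number of geometrically distinct reversible closed geodesics on $(M,F)$ is at least $\dim I(M,F)$. Chaining the two facts gives
\[
\#\{\text{geometrically distinct reversible closed geodesics}\}\;\geq\;\dim I(M,F)\;\geq\;1,
\]
which is exactly the assertion that at least one reversible closed geodesic exists. This completes the argument.

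There is no genuine obstacle at the level of the corollary itself; the entire content is already packed into Theorems \ref{main-thm-0} and \ref{main-thm-1}. The one point that deserves emphasis, and the only place where the argument could fail if it were weakened, is the \emph{strict} inequality $0<\dim I(M,F)$ in Theorem \ref{main-thm-0}. It is precisely the nontriviality of the torus (i.e.\ the fact that $I_0(M,F)$ is not reduced to the identity) that upgrades the lower bound in Theorem \ref{main-thm-1} from the vacuous "at least $0$" to the meaningful "at least $1$." So although the corollary is a one-line deduction, its substance is the assertion that the hypotheses force a positive-dimensional isometry group, which in turn rests on the Clifford--Wolf antipodal map $\psi$ generating a continuous family of isometries once the prime closed geodesics are finite in number.
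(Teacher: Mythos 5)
Your proposal is correct and is exactly the paper's argument: the author derives the corollary by "combining Theorem \ref{main-thm-0} and Theorem \ref{main-thm-1}," i.e.\ the strict positivity $0<\dim I(M,F)$ feeds into the lower bound of Theorem \ref{main-thm-1} to give at least one reversible closed geodesic. Your added remark that $\dim I(M,F)=\dim I_0(M,F)$ is a harmless clarification consistent with the paper's usage.
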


Notice that each geometrically distinct reversible closed geodesic is counted as two. So
Theorem \ref{main-thm-2} indicates that the Anosov conjecture is correct for Finsler spheres with constant flag curvature
and maximal $\dim I(M,F)$, and it provides more precise information. The Katok metrics
with only finite prime closed geodesics belong to the case. Theorem \ref{main-thm-2} privides an intrinsic explanation for why the number of prime closed geodesics for those examples are exactly $[\frac{n+1}{2}]$.

The proof of
Theorem \ref{main-thm-1} relies on a fixed point technique. The fix point set $N\subset M=S^n$ for a family of isometries (or its special case, the common zero set for a family of Killing vector fields) is either a two-points $\psi$-orbit, or a connected totally geodesic
submanifold. So when $\dim N>0$, $(N,F|_N)$ is also a Finsler sphere with $K\equiv 1$ and only finite prime closed geodesic. Then we can use this observation to reduce the dimension and prove Theorem \ref{main-thm-1} by induction.

The proof of Theorem \ref{main-thm-2} when $M$ is an odd dimensional sphere applies an $S^1$-equivariant Morse theory. In this case, the abundance of Killing vector fields implies we can find a generic $X$
from them such that $X$ generates the action of $T=S^1$, and the geodesics are non-degenerated critical $S^1$-orbits for the Morse function $f(\cdot)=F(X(\cdot))^2$. Direct calculation shows each geodesic has an even index and a trivial negative bundle. By the standard argument for the Morse's Lacunary Principle (the $T$-equivariant version) \cite{Bo1954}\cite{Bo1982}\cite{Hi1984}, we see
the number of geometrically distinct prime closed geodesics coincide with
the $T$-equivariant Euler number of $M=S^n$,
which can be calculated by the Leray-Serre spectral sequence.

This paper is organized as following. In Section 2, we recall some fundamental knowledge on Finsler geometry. In Section 3,
we discuss some geometric properties of Finsler spheres with $K\equiv 1$ and only finite prime closed geodesics. In Section 4 and Section 5, we prove Theorem
\ref{main-thm-1} and Theorem \ref{main-thm-2} respectively.

{\bf Acknowledgement.} I would like to thank Wolfgang Ziller for sending me his most recent preprint which inspired this work, and also thank Wolfgang Ziller, Huagui Duan and Xuchao Yao for helpful discussion.

\section{Priliminaries}

\subsection{Finsler metric and Finsler space}
A {\it Finsler metric} on a smooth $n$-dimensional manifold $M$ is a
continuous function $F:TM\rightarrow [0,+\infty)$ satisfying the following conditions:
\begin{description}
\item{\rm (1)} $F$ is positive and smooth on the slit tangent
bundle $TM\backslash 0$.
\item{\rm (2)} For any $y\in T_xM$ and $\lambda>0$, $F(x,\lambda y)=\lambda F(x,y)$.
\item{\rm (3)} When $y$ is a nonzero vector in $T_xM$, the Hessian
matrix $$(g_{ij}(x,y))=\frac{1}{2}[F^2(x,y)]_{y^iy^j}$$
is positive definite, with respect to any {\it standard local coordinates} $x=(x^i)\in M$ and $y=y^j\partial_{x^j}\in T_xM$.
\end{description}
We call $(M,F)$ a {\it Finsler space} or {\it Finsler manifold}.

The Hessian matrix defines an inner product
$$\langle u,v\rangle^F_y=g_{ij}(x,y)u^iv^j=
\frac{1}{2}\frac{\partial^2}{\partial s\partial t}F^2(y+su+tv)|_{s=t=0},$$
which will also be denoted as $g_y^F$ sometimes.
When $g_y^F$ is independent of the choice of $y$ everywhere,
the metric $F$ is Riemannian. The most simple and the most important non-Riemannian Finsler metric is Randers metric,
which is of the form $F=\alpha+\beta$ where $\alpha$ is
a Riemannian metric and $\beta$ is a 1-form.

\subsection{Geodesic of constant speed}

The Finsler metric $F$ on $M$ defines
a distance function $d_F(\cdot,\cdot)$ which does not
satisfy the reversibility $d_F(x',x'')=d_F(x'',x')$ in general.
A smooth curve $c(t)$ on $M$ satisfying
locally minimizing principle is called a {\it geodesic}. In this work,
we usually choose the parameter along the geodesic such that it
has constant speed, i.e. $F(\dot{c}(t))\equiv\mathrm{const}>0$.
Then a geodesic with (positive) constant speed can be equivalently defined as the integration curve of the geodesic spray vector field on $TM\backslash 0$, i.e. $c(t)$ is a geodesic iff for any standard local coordinates, it satisfies
$$\ddot{c}^i(t)+\mathrm{G}^i(c(t),\dot{c}(t))=0,\quad\forall i,$$
where $\mathrm{G}^i=\frac{1}{4}g^{il}([F^2]_{x^jy^l}y^j-[F^2]_{x^l})$
for any standard local coordinates.

\subsection{Flag curvature}

There are many notions of curvature in Finsler geometry. In this paper,
we will only work with flag curvature, which is the natural generalization of sectional curvature in Riemannian geometry.
Let $x$ be a point on $(M,F)$, $y$ a nonzero vector in $T_xM$ (the flag pole), $\mathbf{P}=\mathrm{span}\{y,u\}\subset T_xM$ a tangent plane containing $y$ (the flag), then the flag curvature
for the triple $(x,y,\mathbf{P})$ is defined as
$$K^F(x,y,\mathbf{P})=\frac{\langle R^F_y u,u\rangle_y^F}{
\langle y,y\rangle_y^F\langle u,u\rangle_y^F-
[\langle y,u\rangle_y^F]^2},$$
where $R^F_y:T_xM\rightarrow T_xM$ is the Riemann curvature operator in the Jacobi equation for the variation of a family
of geodesics of constant speeds. See \cite{BCS2000} or \cite{Sh2001} for its explicit presenting by standard local coordinates.

When the metric $F$ is Riemannian, the flag curvature is
just the sectional curvature, which only depends on the point $x$ and the flag $\mathbf{P}$. The dependence of the flag curvature on the flag pole $y$ implies it is a much more localized concept
than the sectional curvature.

Z. Shen provided an important observation relating flag curvature to sectional curvature, which can be refined as following. Let $Y$ be a smooth vector field on $(M,F)$
which is nonvanishing everywhere. Then the inner products
$g^F_{Y(x)}$ defines a Riemannian metric on $M$, which is called the osculation metric of $F$ at $Y$. The following theorem (see Theorem 4.2 in \cite{XDHH2017})
relates the flag curvature $K^F(x,y,\mathbf{P})$ for $y=Y(x)$ and any tangent plane $\mathbf{P}$ containing $y$ and the sectional curvature
$K^{g_Y^F}(x,P)$.

\begin{theorem}\label{thm-geodesic-field}
Let $Y$ be a smooth vector field on $(M,F)$ which is non-vanishing everywhere. Assume the integration curve of
$Y$ passing $x$ is a geodesic of constant speed, then for $y=Y(x)$ and any tangent plane $\mathbf{P}$ containing
$y$, we have $K^{F}(x,y,\mathbf{P})=
K^{g_Y^F}(x,\mathbf{P})$.
\end{theorem}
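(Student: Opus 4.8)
The plan is to establish that the flag curvature $K^F(x,y,\mathbf{P})$ with flag pole $y=Y(x)$ agrees with the sectional curvature of the osculation metric $g_Y^F$ by exploiting the fact that the integral curve of $Y$ through $x$ is an $F$-geodesic. The central idea is that both curvatures are encoded in the second-order behaviour of a family of geodesics through $x$ in the direction $y$, so I would aim to show that the two Jacobi equations—one for $F$ and one for the Riemannian metric $g_Y^F$—coincide along this distinguished geodesic. First I would recall the coordinate expression for the Riemann curvature operator $R_y^F$ in terms of the geodesic spray coefficients $\mathrm{G}^i$, and likewise write down the Riemannian curvature tensor of $g_Y^F$ in terms of its Christoffel symbols.

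The key computational step is to compare the spray coefficients of the two metrics at points along the geodesic. Since $g_Y^F$ is by definition the osculation metric $g^F_{Y(x)}$, its metric coefficients agree with $g_{ij}(x,Y(x))$ pointwise, but its Christoffel symbols involve derivatives of $g_{ij}(x,Y(x))$ in which the $y$-argument now varies with $x$ through $Y$. The crucial simplification comes from the hypothesis that the integral curve of $Y$ is an $F$-geodesic: this forces the extra terms arising from differentiating the flag pole $Y(x)$ to drop out precisely along the curve, because those terms are governed by the covariant derivative of $Y$ along its own flow, which the geodesic condition controls. So I would carefully track how $\partial_{x^k} g_{ij}(x,Y(x)) = [g_{ij}]_{x^k} + [g_{ij}]_{y^l}\,\partial_{x^k}Y^l$, and use the geodesic equation for $Y$ to show that the $[g_{ij}]_{y^l}$ contributions either cancel or reassemble into the $F$-connection terms.

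With the spray coefficients matched along the geodesic, the second step is to propagate this agreement to the curvature operators. Here I would invoke the fact that $R_y^F$ is built from $\mathrm{G}^i$ and its first and second derivatives evaluated at $(x,y)$, and argue that these match the analogous expressions for $g_Y^F$ once the lower-order connection data agree along the flow and the flag pole is fixed at $y=Y(x)$. A clean way to organize this is to note that both curvatures arise as the defect in a second variation: for a one-parameter family of $F$-geodesics emanating near $x$ in direction $y$, the Jacobi operator is exactly $R_y^F$, while for the same family viewed in $g_Y^F$ the Jacobi operator is the Riemannian curvature, and the osculation construction makes the two second fundamental forms agree to the requisite order along a geodesic of $Y$.

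The main obstacle I anticipate is controlling the terms in which the flag pole itself varies, that is, disentangling genuine curvature contributions from artifacts of the $y$-dependence of the osculation metric; the osculation metric's Christoffel symbols are not simply the $F$-connection coefficients in general, and only the geodesic hypothesis on $Y$ rescues the identity. I would therefore devote the bulk of the argument to a careful bookkeeping of the Cartan-tensor-type terms $[g_{ij}]_{y^l}$ and verifying that, restricted to the geodesic where $\dot c = Y(c)$, the geodesic equation $\ddot c^i + \mathrm{G}^i(c,\dot c)=0$ makes every such term vanish or recombine correctly, leaving exactly the claimed equality $K^F(x,y,\mathbf{P})=K^{g_Y^F}(x,\mathbf{P})$.
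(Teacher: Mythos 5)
First, a point of reference: the paper does not prove this statement at all --- it is quoted verbatim from Theorem 4.2 of \cite{XDHH2017} (a refinement of Z. Shen's osculation lemma), so the only thing to compare your plan against is the computation in that reference, which is indeed the route you sketch: write the Christoffel symbols of $g_Y^F$ via the chain rule, exploit Cartan-tensor identities, and compare Riemann curvatures along the distinguished integral curve. So your overall strategy is the standard and correct one.

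However, as written the proposal has a genuine gap at the decisive step, and it also misplaces where the geodesic hypothesis enters. The zeroth-order matching of spray coefficients, $\hat{\mathrm{G}}^i(x,Y(x))=\mathrm{G}^i(x,Y(x))$, does \emph{not} come from the geodesic equation: it follows for \emph{any} nonvanishing $Y$ from the identity $C_{ijk}(x,y)y^k=0$, since every term of $\hat{\Gamma}^i_{jk}Y^jY^k$ involving $[g_{ij}]_{y^l}\partial_{x^k}Y^l$ carries a contraction of the Cartan tensor with $Y$. What this free identity gives you is only that the integral curve through $x$ is an $F$-geodesic iff it is a $g_Y^F$-geodesic. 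The theorem is about curvature, and $R^F_y$ is built from first and second derivatives of $\mathrm{G}^i$ in \emph{all} directions of $TM$, whereas the identity above holds only on the section $y=Y(x)$, i.e.\ on a submanifold of $TM$ of the wrong codimension to be differentiated freely. Differentiating it produces terms $\partial_{y^l}\mathrm{G}^i\cdot\partial_{x^k}Y^l$ and second derivatives of $Y$ that do not cancel by homogeneity alone; this is exactly where the hypothesis that the single integral curve through $x$ is a geodesic must be used, together with further identities such as $[C_{ijk}]_{y^l}y^k=-C_{ijl}$, to show that all dependence on the transverse jets of $Y$ drops out of $K^{g^F_Y}(x,\mathbf{P})$ for planes containing $y$. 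Your proposal asserts that "these match... once the lower-order connection data agree along the flow," which is precisely the statement to be proved, not a consequence of what precedes it. The alternative "clean way" via second variations has the same circularity: identifying the Jacobi operator of $F$ along $c$ with that of $g_Y^F$ \emph{is} the theorem, and the claim that "the osculation construction makes the two second fundamental forms agree to the requisite order" is again an assertion of the conclusion. To close the gap you must either carry out the curvature computation of \cite{XDHH2017} explicitly, or simply cite it, as the paper does.
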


In particular, if further more $Y$ is a geodesic field, i.e. each integration curve of $Y$ is a geodesic of constant speed, then Theorem \ref{thm-geodesic-field}
can be applied to all the points of $M$. That is the original
statement of Z. Shen's theorem in \cite{Sh2001}.

\subsection{Totally geodesic sub-manifold}

A sub-manifold $N$ of a Finsler manifold $(M,F)$ can be naturally endowed with a sub-manifold metric, denoted as $F|_N$. We call
$(N,F|_N)$ a {\it Finsler subspace} or a {\it Finsler sub-manifold}.

The Finsler sub-manifold $(N,F|_N)$ of $(M,F)$ is called {\it totally geodesic} if any geodesic of $(N,F|_N)$ is also a geodesic of $(M,F)$ \cite{XD2017}.

Notice that if $\dim N=0$, it is always totally geodesic. If $\dim N=1$, $N$ is totally geodesic iff each connected component is
a reversible geodesic. If $\dim N>1$, then we have the following easy but important property for $(N,F|_N)$.

\begin{proposition}
Let $N$ be a connected totally geodesic subspace of the Finsler space $(M,F)$, then for any $x\in N$, nonzero vector $y\in T_xN$, and tangent plan $\mathbf{P}\subset T_xN$ containing $y$, we have $K^F(x,y,\mathbf{P})=K^{F|_N}(x,y,\mathbf{P})$.
\end{proposition}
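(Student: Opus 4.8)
The plan is to reduce the equality of flag curvatures to an equality of the numerators in the defining formula, and then to read that off from the second variation of arc length along a common geodesic. Fix $x\in N$, a nonzero $y\in T_xN$, and $\mathbf{P}=\mathrm{span}\{y,u\}\subset T_xN$; replacing $u$ by $u-\frac{\langle y,u\rangle^F_y}{\langle y,y\rangle^F_y}y$ I may assume $u$ is $g_y^F$-orthogonal to $y$, which changes neither $\mathbf{P}$ nor the flag curvature. First I would observe that the fundamental tensor restricts: since $(F|_N)^2$ is the restriction of $F^2$, for $y\in T_xN$ the inner product $\langle\cdot,\cdot\rangle_y^{F|_N}$ is exactly the restriction of $\langle\cdot,\cdot\rangle_y^F$ to $T_xN$. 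Hence the denominators of $K^F(x,y,\mathbf{P})$ and $K^{F|_N}(x,y,\mathbf{P})$ coincide, and it suffices to prove that the numerators agree, i.e. $\langle R^F_y u,u\rangle^F_y=\langle R^{F|_N}_y u,u\rangle^{F|_N}_y$ for $u\in T_xN$.

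Next I would introduce the geodesic carrying this curvature information. Let $c(t)$ be the geodesic of $(N,F|_N)$ with $c(0)=x$ and $\dot c(0)=y$; because $N$ is totally geodesic, $c$ is simultaneously a geodesic of $(M,F)$. The numerator $\langle R^F_y u,u\rangle^F_y$ is precisely the curvature term that appears in the index form (the second variation of arc length, or of energy) of $c$ in $(M,F)$ via the Jacobi equation, see \cite{BCS2000}; the analogous statement holds in $(N,F|_N)$. The key point is that these two index forms agree on variation fields tangent to $N$: a variation of $c$ through geodesics of $(N,F|_N)$ consists, by total geodesy, of curves that are also geodesics of $(M,F)$, and along any curve lying in $N$ the $F$-length equals the $F|_N$-length. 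Therefore, for every piecewise-smooth field $V$ along $c$ that is tangent to $N$ and vanishes at the endpoints, the $(M,F)$-index form $I^M(V,V)$ equals the $(N,F|_N)$-index form $I^N(V,V)$.

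To isolate the curvature I would use that the index form splits as
$$I(V,V)=\int_0^\ell\big[\langle D_{\dot c}V,D_{\dot c}V\rangle^F_{\dot c}-\langle R^F_{\dot c}V,V\rangle^F_{\dot c}\big]\,dt,$$
where $D_{\dot c}$ denotes the covariant derivative along $c$ with reference vector $\dot c$. For $V$ tangent to $N$ the covariant derivatives computed in $M$ and in $N$ coincide, $D^M_{\dot c}V=D^N_{\dot c}V$, because total geodesy makes the Finslerian second fundamental form of $N$ along $c$ vanish; consequently the kinetic terms of $I^M$ and $I^N$ are identical. Subtracting, the curvature terms must also be identical for all such $V$, and localizing with cut-off fields concentrated near $t=0$ and taking $V(0)=u$ yields $\langle R^F_y u,u\rangle^F_y=\langle R^{F|_N}_y u,u\rangle^{F|_N}_y$, which is the desired numerator equality.

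The hard part is the middle step, the identity $D^M_{\dot c}V=D^N_{\dot c}V$ for tangent fields $V$, i.e. the vanishing of the Finslerian second fundamental form of a totally geodesic $N$: unlike the Riemannian case, the Chern covariant derivative depends on the reference direction $\dot c$, so this must be checked from the spray-level characterization of total geodesy, namely that the geodesic spray of $(M,F)$ is tangent to $TN$ along $TN$ and restricts there to the spray of $(N,F|_N)$. An alternative route, closer to the tools just developed, is to invoke Theorem \ref{thm-geodesic-field}: one builds a geodesic field $Y$ of $(M,F)$ that is tangent to $N$ and restricts to a geodesic field of $(N,F|_N)$ with $Y(x)=y$, so that both flag curvatures become sectional curvatures of the Riemannian osculation metrics $g_Y^F$ and $g_{Y|_N}^{F|_N}=(g_Y^F)|_N$; the proposition then follows from the classical Gauss equation once one knows that $N$ is totally geodesic in the Riemannian manifold $(M,g_Y^F)$, which is the obstacle transferred to the osculation picture.
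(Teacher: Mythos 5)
The paper states this proposition with no proof at all (it is offered as an ``easy but important property,'' with the theory of totally geodesic Finsler submanifolds deferred to \cite{XD2017}), so there is no in-paper argument to compare against and your proposal must stand on its own. Its outer architecture is sound: the fundamental tensor of $F|_N$ is the restriction of that of $F$, so the denominators agree; the length (or energy) functional of $(M,F)$ restricted to curves lying in $N$ is literally the length functional of $(N,F|_N)$, and since $c$ is a geodesic of both, the two index forms agree on variation fields tangent to $N$ vanishing at the endpoints; and the localization with bump functions recovering the pointwise numerator identity from the integrated one is routine.

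The genuine gap is exactly the step you flag yourself: $D^M_{\dot c}V=D^N_{\dot c}V$ for tangent fields $V$. This is not a removable technicality --- it carries essentially all the content of the proposition. Without it, equality of the index forms yields only the integrated Gauss-type identity
$\int_0^\ell\left[\langle D^M_{\dot c}V,D^M_{\dot c}V\rangle^F_{\dot c}-\langle D^N_{\dot c}V,D^N_{\dot c}V\rangle^{F|_N}_{\dot c}\right]dt=\int_0^\ell\left[\langle R^F_{\dot c}V,V\rangle^F_{\dot c}-\langle R^{F|_N}_{\dot c}V,V\rangle^{F|_N}_{\dot c}\right]dt$,
from which neither side can be concluded to vanish. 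The step is nevertheless true and provable by precisely the route you name. In adapted coordinates with $N=\{x^{m+1}=\cdots=x^n=0\}$, total geodesy is equivalent to the spray identities $\mathrm{G}^a|_{TN}=0$ for normal indices $a$ and $\mathrm{G}^i|_{TN}=\bar{\mathrm{G}}^i$ for tangential indices $i$, where $\bar{\mathrm{G}}^i$ are the spray coefficients of $(N,F|_N)$. Since $\partial_{y^j}$ for tangential $j$ is tangent to $TN$, differentiating these identities in fiber directions gives, for the nonlinear connection coefficients $N^i_j=\partial \mathrm{G}^i/\partial y^j$, that $N^a_j|_{TN}=0$ and $N^i_j|_{TN}=\bar{N}^i_j$; as $(D_{\dot c}V)^i=\dot V^i+N^i_j(c,\dot c)V^j$ with reference vector $\dot c$, this shows $D^M_{\dot c}V$ stays tangent to $N$ and coincides with $D^N_{\dot c}V$. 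With that lemma inserted your argument closes. Your alternative route through the osculating metric faces the same obstruction in a less convenient form (one must show $N$ is totally geodesic in the Riemannian manifold $(M,g^F_Y)$), so the variational route with the spray computation is the one to complete.
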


For example, the fixed point set of a family of isometries on
$(M,F)$ provides a special class of closed imbedded totally geodesic sub-manifold, in which there is a sub-class provided by the common zero set of a family of Killing vector field.

\subsection{Isometry and Killing vector field}

An {\it isometry} on a Finsler space $(M,F)$ is a diffeomorphism $\varphi:M\rightarrow M$ satisfying $\varphi^*F=F$. Its infinitesimal generator is a
{\it Killing vector field} $X$ defined by $L_XF=F$.
We denote $I(M,F)$ the group of all isometries of $(M,F)$,
and $I_0(M,F)$ the identity component of $I(M,F)$ (we call it the {\it connected isometry group}). They are Lie groups \cite{DH2002}, which
Lie algebra $\mathfrak{g}=\mathrm{Lie}(I(M,F))$ coincides with the space
of all Killing vector fields.

In later discussion, we will need the following Lemma, which follows Lemma 3.1 in \cite{DX2004} immediately.

\begin{lemma}\label{lemma-2-1}
Let $X$ be a Killing vector field of the Finsler
space $(M,F)$ which does not vanish at $x\in M$. Then $x$ is
a critical point for the function $f(\cdot)=F(X(\cdot))^2$ iff
the integration curve of $X$ passing $x$ is a geodesic.
\end{lemma}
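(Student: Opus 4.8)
The plan is to exploit the flow $\varphi_t$ of $X$, which consists of isometries since $X$ is Killing, together with the first variation formula for the energy functional $E(c|_{[0,T]})=\frac{1}{2}\int_0^T F(\dot{c}(t))^2\,dt$. First I would record two consequences of the Killing condition. Because $X$ is invariant under its own flow, $X(\varphi_t(p))=d\varphi_t(X(p))$, and since each $\varphi_t$ preserves $F$, the function $f=F(X(\cdot))^2$ is $\varphi_t$-invariant. In particular the integral curve $c(t)=\varphi_t(x)$ has $t$-constant speed $F(\dot{c}(t))=F(X(x))=\sqrt{f(x)}>0$, so it is a legitimate candidate for a constant-speed geodesic, and by invariance $x$ is a critical point of $f$ if and only if every point $\varphi_{t_0}(x)$ of the curve is.

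Next I would set up the variation. Fix $v\in T_xM$, choose a curve $\gamma(s)$ with $\gamma(0)=x$ and $\gamma'(0)=v$, and consider the family of integral curves $c_s(t)=\varphi_t(\gamma(s))$ on $[0,T]$, whose variation field along $c=c_0$ is $V(t)=d\varphi_t(v)$. The same isometry argument shows each $c_s$ has $t$-constant speed $\sqrt{f(\gamma(s))}$, so $E(c_s|_{[0,T]})=\frac{1}{2}T f(\gamma(s))$ and hence $\frac{d}{ds}\big|_{s=0}E(c_s|_{[0,T]})=\frac{1}{2}T\,df_x(v)$.

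I would then compare this with the standard Finsler first variation formula
\begin{equation*}
\frac{d}{ds}\Big|_{s=0}E(c_s|_{[0,T]})=\Big[g^F_{\dot{c}}(\dot{c},V)\Big]_0^T-\int_0^T g^F_{\dot{c}(t)}\big(D_{\dot{c}}\dot{c},V\big)\,dt,
\end{equation*}
where $D_{\dot{c}}\dot{c}$ denotes the covariant derivative of $\dot{c}$ along $c$ with reference vector $\dot{c}$, so that a constant-speed curve is a geodesic precisely when $D_{\dot{c}}\dot{c}\equiv 0$. The boundary term vanishes: using $\dot{c}(T)=d\varphi_T(X(x))$, $V(T)=d\varphi_T(v)$ and the invariance $g^F_{d\varphi_T w}(d\varphi_T\cdot,d\varphi_T\cdot)=g^F_w(\cdot,\cdot)$ of the osculating inner products under the isometry $\varphi_T$, one gets $g^F_{\dot{c}(T)}(\dot{c}(T),V(T))=g^F_{X(x)}(X(x),v)=g^F_{\dot{c}(0)}(\dot{c}(0),V(0))$. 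Equating the two expressions gives $\frac{1}{2}T\,df_x(v)=-\int_0^T g^F_{\dot{c}(t)}(D_{\dot{c}}\dot{c},V)\,dt$ for every $T$; differentiating in $T$ at $T=0$ yields $df_x(v)=-2\,g^F_{X(x)}\big((D_{\dot{c}}\dot{c})(0),v\big)$.

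Finally, since $g^F_{X(x)}$ is positive definite and $v\in T_xM$ is arbitrary, $df_x=0$ holds if and only if $(D_{\dot{c}}\dot{c})(0)=0$, i.e. the geodesic equation is satisfied at $x$. If the integral curve is a geodesic, then $D_{\dot{c}}\dot{c}\equiv 0$ and $x$ is critical; conversely, if $x$ is critical then so is each $\varphi_{t_0}(x)$ by the flow-invariance of $f$, and applying the pointwise identity at every $\varphi_{t_0}(x)$ forces $D_{\dot{c}}\dot{c}\equiv 0$, so $c$ is a geodesic. The step requiring the most care is the Finsler first variation formula itself, where the nonlinearity of $F$ means that both the covariant derivative and the inner products must be taken with reference vector $\dot{c}$; the only place this could obstruct the argument is the boundary-term cancellation, and that is exactly what the isometry invariance of $g^F$ under $d\varphi_t$ supplies.
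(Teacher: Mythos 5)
Your proof is correct. Note that the paper does not actually prove this lemma --- it is stated as following ``immediately'' from Lemma 3.1 of the cited Deng--Xu paper on Clifford--Wolf translations --- so there is no in-text argument to compare against; what you have written is a complete, self-contained proof along the standard lines that the cited reference uses. The two points that genuinely require care are both handled properly: (i) the boundary term $\bigl[g^F_{\dot c}(\dot c,V)\bigr]_0^T$ in the first variation of energy cancels because the isometry $\varphi_T$ preserves the osculating inner products with the correctly transported reference vector, which is exactly where the Killing hypothesis enters; and (ii) the passage from ``$(D_{\dot c}\dot c)(0)=0$'' to ``$D_{\dot c}\dot c\equiv 0$ along the whole integral curve'' uses the flow-invariance of $f$, so that criticality at $x$ propagates to every $\varphi_{t_0}(x)$. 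One could quibble that you should remark that $X(\gamma(s))\neq 0$ for $s$ near $0$ (so that $f$ is smooth where you differentiate it) and that $X(\varphi_t(x))=d\varphi_t(X(x))\neq 0$ along the whole orbit (so that $g^F_{\dot c}$ is defined for all $t$), but both follow immediately from $X(x)\neq 0$, continuity, and the invariance of $X$ under its own flow, which you state. No gap.
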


\section{Finsler sphere with $K\equiv 1$ and only finite prime closed geodesics}

\subsection{The antipodal map and the exponential map}

We start by assuming that $(M,F)$ is a connected and simply connected Finsler space with $K\equiv 1$. Later we will see $M$ must be diffeomorphic to a standard sphere.

Firstly, we introduce the antipodal map $\psi$ in \cite{BFIMZ2017} and \cite{Sh1997}.
For any $x\in M$, let $c(t)$ be a unit speed geodesic (i.e. $F(\dot{c}(t))\equiv 1$) such that $x=c(0)$. Then the Jacobi equation indicates $x^*=c(\pi)$ is the first conjugation point.
Changing the geodesic $c(t)$ among all unit speed geodesics outgoing from $x$, the corresponding $x^*=c(\pi)$ must be fixed. So the map $\psi$ from $x$ to $x^*$ is well defined. When $(M,F)$ is a standard Riemannian sphere, $\psi$ is just the
antipodal map. So we will simply call $\psi$ the {\it antipodal map} and notice that $\psi^2\neq \mathrm{id}$ in general.

In \cite{BFIMZ2017}, it has been shown that $\psi$ is an isometry of $(M,F)$. Furthermore, we have the following easy lemma.

\begin{lemma}\label{lemma-2-0}
The antipodal map $\psi$ is a Clifford Wolf translation which
is contained in the center of $I(M,F)$.
\end{lemma}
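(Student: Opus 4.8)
The plan is to establish the two assertions separately: the centrality of $\psi$ in $I(M,F)$, which is formal, and the constant-displacement property $d_F(x,\psi(x))\equiv\pi$, which carries the only real geometric content. I would dispose of centrality first, since it follows immediately from the intrinsic, direction-independent nature of the definition of $\psi$.

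For centrality, fix $\phi\in I(M,F)$ and $x\in M$, and pick a unit-speed geodesic $c$ with $c(0)=x$. Since $\phi$ preserves $F$, it sends unit-speed geodesics to unit-speed geodesics and preserves the Jacobi equation, hence carries first conjugate points to first conjugate points; therefore $\phi\circ c$ is a unit-speed geodesic issuing from $\phi(x)$ whose value at parameter $\pi$ is the first conjugate point of $\phi(x)$. By the direction-independent definition of $\psi$ this value is $\psi(\phi(x))$, while it also equals $\phi(c(\pi))=\phi(\psi(x))$. Hence $\psi\circ\phi=\phi\circ\psi$ for every $\phi\in I(M,F)$, so $\psi$ lies in the center of $I(M,F)$.

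For the Clifford--Wolf property I would first record the kinematic relation $\psi(c(t))=c(t+\pi)$ valid along every unit-speed geodesic $c$: the point $c(t+\pi)$ is exactly the first conjugate point of $c(t)$ along the same geodesic, and $\psi$ does not depend on the chosen outgoing direction. Taking $t=0$ gives $\psi(x)=c(\pi)$, and since $c|_{[0,\pi]}$ is an admissible curve of length $\pi$, we obtain the easy bound $d_F(x,\psi(x))\le\pi$.

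The reverse inequality is the only real obstacle. It amounts to showing that every unit-speed geodesic issuing from $x$ remains minimizing on the whole interval $[0,\pi]$, equivalently that the cut locus of $x$ reduces to the single point $\psi(x)$. Granting this sphere-like property --- which is precisely what is extracted from $K\equiv1$ in \cite{Sh1997} and exploited in \cite{BFIMZ2017} --- the segment $c|_{[0,\pi]}$ is length-minimizing, so $d_F(x,\psi(x))=\pi$ for every $x$; the displacement function is then the constant $\pi$, and $\psi$ is a Clifford--Wolf translation. I expect this cut-locus-equals-conjugate-locus identification to be the delicate step, and I would import it from the cited works rather than redo the Jacobi-field and cut-point analysis here.
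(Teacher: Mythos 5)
Your proof is correct. The paper actually offers no proof of this lemma at all --- it is stated as an ``easy lemma'' with the isometry property of $\psi$ attributed to \cite{BFIMZ2017} --- so there is no argument of the author's to compare against; your two steps (centrality from the direction-independent, isometry-invariant definition of the first conjugate point, and $d_F(x,\psi(x))\equiv\pi$ from the sphere-like structure of $\mathrm{Exp}_x$ on $B^n_o(\pi)$ established in \cite{Sh1997}) rest on exactly the facts the paper itself imports and uses later, e.g.\ in the proofs of Lemmas \ref{lemma-3-1} and \ref{lemma-3-3}. You correctly isolate the only nontrivial point, namely that the cut value equals the conjugate value $\pi$ so that the easy upper bound $d_F(x,\psi(x))\le\pi$ is in fact an equality, and deferring that to the cited sources is consistent with the paper's own treatment.
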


Recall that $\varphi$ is a Clifford Wolf translation on $(M,F)$ iff it is an isometry satisfying
$d(x,\varphi(x))\equiv\mathrm{const}$ \cite{DX2004}. When the metric is not reversible, it may happen that $\varphi$ is a Clifford Wolf, but $\varphi^{-1}$ is not.

The exponential map $\mathrm{Exp}_x$ at any $x\in M$
is define as following. Let $c(t)$ be any constant speed geodesic with $c(0)=x$ and $\dot{c}(0)=u$, then $\mathrm{Exp}_x(u)=c(1)$. It is a
surjective map from the closed ball
$$B^n_o(\pi)=\{x|F(x)\leq\pi\}\subset T_xM$$
to $M$, which maps the boundary $S^{n-1}_o(\pi)\subset B^n_o(\pi)$ to $x^*$. As being pointed out in \cite{BFIMZ2017}, it induces a homoemorphism  from a standard $S^n$ to $M$.
In fact we can prove an even stronger statement, i.e.
\begin{lemma}\label{lemma-3-1}
Any connected and simply connected Finsler space $(M,F)$ with $K\equiv1$ is diffeomorphic to a standard sphere.
\end{lemma}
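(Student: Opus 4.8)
The plan is to upgrade the homeomorphism induced by $\mathrm{Exp}_x$ to an honest diffeomorphism, the whole argument resting on the exponential maps based at two ``antipodal'' points. First I would fix $x\in M$ and study $\mathrm{Exp}_x$ on the open ball $B^n_o(\pi)^\circ=\{v\in T_xM: F(x,v)<\pi\}$. Since $K\equiv 1$, the Jacobi equation along each unit speed geodesic from $x$ has its first zero after $0$ exactly at parameter $\pi$, so the first conjugate point occurs at distance $\pi$; equivalently $d\mathrm{Exp}_x$ is nonsingular at every $v$ with $F(x,v)<\pi$, and $\mathrm{Exp}_x$ is a local diffeomorphism there. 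The homeomorphism from the standard $S^n$ recorded above restricts to a continuous bijection $\mathrm{Exp}_x\colon B^n_o(\pi)^\circ\to M\setminus\{x^*\}$, and a bijective local diffeomorphism is a diffeomorphism; hence $\mathrm{Exp}_x$ maps $B^n_o(\pi)^\circ$ diffeomorphically onto $M\setminus\{x^*\}$. Since the unit ball of $F(x,\cdot)$ is convex (conditions (2) and (3)), $B^n_o(\pi)^\circ$ is a bounded convex open neighborhood of the origin, hence diffeomorphic to $\mathbb{R}^n$; this already identifies $M\setminus\{x^*\}$ smoothly with $\mathbb{R}^n$.

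It remains to install a compatible smooth chart around the collapsed point $x^*$. For this I would repeat the construction at the point $p=\psi^{-1}(x)$, which exists and is unique because $\psi$ is an isometry, hence a diffeomorphism (Lemma \ref{lemma-2-0}). By the definition of $\psi$ one has $\psi(p)=x$, so the map $\mathrm{Exp}_p$ collapses its boundary sphere to $x$ and, exactly as above, restricts to a diffeomorphism of $B^n_o(\pi)^\circ$ onto $M\setminus\{x\}$; since $x^*\neq x$, this exhibits $x^*$ as a regular interior image point and supplies a smooth chart around it. As $d(x,x^*)=\pi>0$ forces $x\neq x^*$, the two charts $U_0=M\setminus\{x^*\}$ and $U_1=M\setminus\{x\}$ cover $M$, each is diffeomorphic to $\mathbb{R}^n$, and on the overlap $U_0\cap U_1=M\setminus\{x,x^*\}$ the transition $\mathrm{Exp}_p^{-1}\circ\mathrm{Exp}_x$ is smooth, being built from the two smooth exponential maps and a smooth inverse. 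Equivalently, foliating $M\setminus\{x,x^*\}$ by the geodesics running from $x$ to $x^*$ presents it as a smooth cylinder $(0,\pi)\times S^{n-1}$ whose two ends are filled in smoothly by the centers of the two exponential balls, so that $M$ is a smoothly compactified suspension of the indicatrix $I_x=\{v\in T_xM: F(x,v)=1\}\cong S^{n-1}$.

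The main obstacle is the final step: checking that this two-chart smooth structure is the \emph{standard} one and not an exotic sphere. In the twisted-sphere picture, $M$ is two $n$-disks glued along $S^{n-1}$ by the diffeomorphism $u\mapsto\dot c_u(\pi)$ carrying the initial direction $u\in I_x$ of the geodesic $c_u$ to its terminal direction at $x^*$, and one must show this gluing extends over the disk. I would produce the needed isotopy from the geodesic flow itself: reading the family $t\mapsto(u\mapsto\dot c_u(t))$ through the nonsingular differential of $\mathrm{Exp}_x$ and a parallel identification along each $c_u$ deforms the terminal-direction map to the identity on directions at $x$, hence to a linear gluing, for which the glued manifold is the standard $S^n$. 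Verifying that this deformation stays through orientation-preserving diffeomorphisms is the delicate point; away from it, every step is a routine consequence of $K\equiv 1$, the absence of conjugate points before distance $\pi$, and the homeomorphism already in hand.
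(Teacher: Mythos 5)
Your construction of the two charts follows the same general strategy as the paper, but it stalls exactly at the step you yourself flag as ``the main obstacle'': showing that the two-chart smooth structure is the \emph{standard} one. That step is not a formality --- every exotic sphere in dimension $\neq 4$ is a twisted sphere covered by two such charts --- and your proposed resolution (an isotopy of the terminal-direction map $u\mapsto\dot c_u(\pi)$ to a linear map, built from the geodesic flow and parallel transport) is not carried out: the maps $u\mapsto\dot c_u(t)$ take values in unit spheres at varying points, and you would have to specify the identifications, check that each stage is a diffeomorphism of a fixed $S^{n-1}$, and verify the endpoints, none of which is done. The paper avoids this entirely by a different choice of second chart: instead of $\mathrm{Exp}_p$ at $p=\psi^{-1}(x)$ (whose geodesic pencil through a generic point $q$ is unrelated to the pencil from $x$, so the transition map is not computable in closed form), it uses the \emph{incoming} exponential map $\mathrm{Exp}^*_{x^*}$ at $x^*=\psi(x)$, which reparametrizes the \emph{same} geodesics from $x$ from their other endpoint. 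The transition map is then computed explicitly to be $\phi_{12}(u)=-\frac{\pi-\|u\|}{\|u\|}\,u$, which is precisely the transition map of the round sphere in polar charts at antipodal points; this identifies $M$ with the standard $S^n$ directly, with no isotopy or exotic-sphere discussion needed. If you want to salvage your argument, replacing your second chart by this incoming exponential and computing the overlap map is the missing idea.

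A secondary but real issue: for a non-Riemannian Finsler metric the geodesic spray is in general only $C^1$ across the zero section, so $\mathrm{Exp}_x$ is smooth on the punctured ball but only $C^1$ at the origin. Your claim that $\mathrm{Exp}_x$ restricts to a diffeomorphism of the open ball onto $M\setminus\{x^*\}$ is therefore only a $C^1$ statement at the center of each chart. The paper handles this by building a $C^1$-identification with the standard sphere and then invoking the fact that a $C^1$ structure can be uniquely smoothed (hence the $C^1$-homeomorphism can be perturbed to a genuine diffeomorphism); your write-up should do the same rather than treating the exponential maps as smooth everywhere.
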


\begin{proof}
We just need to prove $M$ is $C^1$-homoemorphic to a standard sphere, then the $C^1$-homoemorphism can be perturbed to be
a smooth diffeomorphism.

We take the local charts on $M$ as following. Let $x$ be any point on $M$, $x^*=\psi(x)$, $\langle\cdot,\cdot\rangle$ be the inner product
on $T_xM$ and $||\cdot||$ the corresponding norm. Denote $U_1$ and $U_2$ two copies of
$$U=\{u\in T_xM\mbox{ with } ||u||<\frac{3\pi}{4}\},$$
and $V_1$ and $V_2$ two other open balls in $T_xM$, i.e.
\begin{eqnarray*}
V_1=\{u|F(u)<\frac{3\pi}4\}\mbox{ and }
V_2=\{u|F(-u)<\frac{3\pi}4\}.
\end{eqnarray*}

We can find two diffeomorphisms $\varphi_i:U_i\rightarrow V_i$, $i=1$, $2$, such that when $\frac{\pi}{4}<||u||<\frac{3\pi}{4}$,
\begin{equation*}
\varphi_1(u)=\frac{||u||}{F(u)}u\mbox{ and }
\varphi_2(u)=\frac{||u||}{F(-u)}u.
\end{equation*}

Define the two chart map $\phi_i:U_i\rightarrow M$, such that
\begin{eqnarray*}
\phi_1(u)=\mathrm{Exp}_x(\varphi_1(u)),
\mbox{ and }\\
\phi_2(u)={\mathrm{Exp}}^*_{x^*}(\psi_*(\varphi_2(u))),
\end{eqnarray*}
in which $\mathrm{Exp}^*_{x^*}$ is the exponential map of an incoming version at $x^*$, i.e. for any constant speed geodesic $c(t)$
with $c(0)=x^*$ and $\dot{c}(0)=-v$,
$\mathrm{Exp}^*_{x^*}(v)=c(-1)$.
Notice that $\mathrm{Exp}_x$ is a $C^1$-map around $o\in T_xM$ \cite{BCS2000}. Similarly so does $\mathrm{Exp}^*_{x^*}$.

By the definitions of the antipodal map and the exponential maps, it is not hard to check that
the transfer map between the two charts
$$\phi_{12}=\phi_2\circ\phi_1^{-1}:\{u\in U_1, \frac{\pi}{4}<||u||<\frac{3\pi}{4}\}
\rightarrow \{u\in U_2, \frac{\pi}{4}<||u||<\frac{3\pi}{4}\}$$
satisfies
$$\phi_{12}(u)=-\frac{\pi-||u||}{||u||}u.$$
So by the charts $(U_1,\phi_1)$, $(U_2,\phi_2)$, the identification between $U_1$ and $U_2$ and the translation map
$\phi_{12}$, $M$ is identified with a standard $S^n$, up to
a $C^1$-homeomorphism.
\end{proof}

\subsection{Totally geodesic sub-manifolds}

Then we turn to totally geodesic sub-manifolds of a Finsler sphere $(M,F)=(S^n,F)$ with $n>1$ and $K\equiv 1$.

By the same argument as in the proof of Lemma \ref{lemma-3-1}, we can prove

\begin{lemma}\label{lemma-3-2}
Let $(M,F)=(S^n,F)$ be a Finsler sphere with $K\equiv 1$, and $N$ a closed connected imbedded totally geodesic submanifold with $\dim N>1$. Then $(N,F|_N)$ is also a Finsler sphere with $K\equiv 1$.
\end{lemma}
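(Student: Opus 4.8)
The plan is to show that $(N,F|_N)$ meets exactly the conditions under which the two-chart argument in the proof of Lemma \ref{lemma-3-1} produces a sphere, and then to rerun that argument intrinsically on $N$.

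First I would check that $(N,F|_N)$ has constant flag curvature $K\equiv 1$. This is immediate from the flag-curvature comparison for totally geodesic submanifolds (the Proposition in Section 2.4): for any $x\in N$, nonzero $y\in T_xN$ and plane $\mathbf{P}\subset T_xN$ containing $y$, one has $K^{F|_N}(x,y,\mathbf{P})=K^{F}(x,y,\mathbf{P})=1$. Thus $(N,F|_N)$ is a connected Finsler manifold with $K\equiv 1$, and since $N$ is totally geodesic its geodesics are exactly the ambient geodesics tangent to $N$.

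Next I would promote the ambient antipodal map $\psi$ to an antipodal map for $N$. Fix $x\in N$ and a unit vector $y\in T_xN$. The geodesic $c(t)$ of $(M,F)$ with $c(0)=x$, $\dot c(0)=y$ lies in $N$ by total geodesy, and by the definition of $\psi$ its first conjugate point is $c(\pi)=\psi(x)$; hence $\psi(x)\in N$. As this holds for every such $y$, the restriction $\psi|_N$ is a well-defined self-map of $N$ which, being the restriction of a diffeomorphism between equidimensional manifolds, is an immersion into $N$, so $\psi_*(T_xN)\subseteq T_{\psi(x)}N$ and, by connectedness and compactness of $N$, $\psi|_N$ maps $N$ onto itself. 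Moreover $\psi|_N$ is precisely the antipodal map of $(N,F|_N)$: every unit speed geodesic of $N$ issuing from $x$ is a unit speed geodesic of $M$, and therefore reaches its first conjugate point $\psi(x)$ at parameter $\pi$.

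With these facts in hand I would replay the construction of Lemma \ref{lemma-3-1} with $T_xM$ replaced by $T_xN$ throughout. Writing $m=\dim N$, the exponential map of $N$ is the restriction $\mathrm{Exp}^N_x=\mathrm{Exp}_x|_{T_xN}$, which remains $C^1$ near the origin and is surjective from the ball $\{u\in T_xN\mid F(u)\le\pi\}$ onto $N$, carrying its boundary sphere to $x^*=\psi(x)$; likewise the incoming exponential $\mathrm{Exp}^*_{x^*}$ restricts to $T_{x^*}N$ and sweeps out a neighborhood of $x^*$ in $N$. Building the two charts $\phi_1,\phi_2$ from two copies of $U=\{u\in T_xN\mid \|u\|<\tfrac{3\pi}{4}\}$ exactly as before, the transition map is again $\phi_{12}(u)=-\tfrac{\pi-\|u\|}{\|u\|}u$ on $T_xN$, which identifies $N$ with a standard $S^{m}$ up to a $C^1$-homeomorphism; smoothing this homeomorphism gives the desired diffeomorphism. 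Combined with the first step, $(N,F|_N)$ is a Finsler sphere with $K\equiv 1$. The main obstacle is the second step, namely pinning down that $\psi$ genuinely preserves $N$ and that both the outgoing and incoming exponential charts stay inside $N$; once total geodesy is used to keep the relevant geodesics in $N$, so that $x^*\in N$ and $\psi_*$ sends $T_xN$ into $T_{x^*}N$, the chart computations are identical to those already carried out for $M$ and require no new estimates.
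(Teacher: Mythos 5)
Your proposal is correct and follows essentially the same route as the paper, whose entire proof of this lemma is the remark that the argument of Lemma \ref{lemma-3-1} applies verbatim to $N$; you simply make explicit the three ingredients that remark relies on (constancy of $K^{F|_N}$ via the Proposition in Section 2.4, the fact that $\psi$ restricts to the antipodal map of $(N,F|_N)$ because geodesics of the closed totally geodesic $N$ are complete geodesics of $M$ all focusing at $\psi(x)$ at time $\pi$, and the restriction of the two exponential charts to $T_xN$). No genuinely different idea is introduced, and the details you supply are the intended ones.
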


When $N$ in Lemma \ref{lemma-3-2} is a hypersurface, i.e. of co-dimension one, we have the following lemma, which will be
used in Section 5.

\begin{lemma} \label{lemma-3-3}
Let $(M,F)=(S^n,F)$ be a Finsler sphere with $n>1$ and $K\equiv 1$. Let
 $N$ be a closed imbedded totally geodesic hypersurface of $(M,F)$.
Then we have the following:
\begin{description}
\item{\rm (1)} $N$ must be connected.
\item{\rm (2)} $M\backslash N$ has two
connected components.
\item{\rm (3)} The action of $\psi$ on the two connected components of $M\backslash N$ exchanges them.
\item{\rm (4)} Any geodesic of $M$ either belong to $N$ or intersects with $N$.
    \end{description}
\end{lemma}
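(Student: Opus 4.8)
The plan is to exploit the geometric structure of the antipodal map $\psi$ together with the fact that $N$, being a closed imbedded totally geodesic hypersurface with $\dim N = n-1 > 0$, is itself (by Lemma \ref{lemma-3-2}, if $\dim N > 1$) a Finsler sphere with $K \equiv 1$, and in any case satisfies the conjugate-distance structure coming from $K\equiv 1$. The central observation I would use is that any unit speed geodesic $c(t)$ of $(M,F)$ realizes conjugate points at parameter distance $\pi$, with $c(\pi) = \psi(c(0))$, so a geodesic tangent to $N$ at a point stays in $N$ (total geodesy) and in particular $\psi$ restricts to the antipodal map of $(N,F|_N)$. This forces $\psi(N) = N$.

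For part (1), I would argue that $N$ must be connected because $(N,F|_N)$ inherits $K\equiv 1$ and, via the exponential-map argument of Lemma \ref{lemma-3-1}, any connected component is already a full sphere $S^{n-1}$; but a closed imbedded hypersurface in $S^n$ cannot contain two disjoint copies of $S^{n-1}$ while remaining a totally geodesic hypersurface on which $\psi$ acts as the antipodal map, since $\psi$ has no invariant proper nonempty subset of such a component. Concretely, fix $x \in N$ and a unit vector $u \in T_xN$; sweeping $u$ over the unit sphere of $T_xN$ and applying $\mathrm{Exp}_x$ covers a full $(n-1)$-sphere lying in $N$ (by total geodesy the geodesics stay in $N$), and this exhausts the component of $x$; hence each component is all of a standard $S^{n-1}$, and a second component would have to meet the first through the conjugate-point identifications, a contradiction.

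For parts (2) and (3), I would use the chart construction of Lemma \ref{lemma-3-1} adapted to $N$: pick $x \in N$ and note $T_xN$ is a hyperplane in $T_xM$, so $T_xM \setminus T_xN$ has two half-spaces. Following geodesics from $x$ with initial velocity in each open half-space, and using that $\mathrm{Exp}_x$ is a homeomorphism from the $\pi$-ball onto $M$ collapsing its boundary sphere to $x^*$, the two half-space directions sweep out exactly two open regions of $M$ whose common boundary is $N$; this gives the two components of $M \setminus N$. Since $\psi$ sends $c(0)$ to $c(\pi)$ along every geodesic, it sends a short geodesic leaving $x$ on one side of $N$ to the continuation arriving at $x^*$ from the opposite side, so $\psi$ interchanges the two components. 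The main obstacle I anticipate is making this half-space/region correspondence rigorous and globally well-defined rather than merely local — in particular verifying that the two regions so defined are genuinely the two connected components and that the side-assignment is consistent as the base point $x$ varies over $N$; this requires the kind of careful chart-gluing already carried out for $M$ in Lemma \ref{lemma-3-1}.

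Finally, for part (4), I would take any geodesic $\gamma$ of $M$ not contained in $N$ and show it must cross $N$. By part (2), $\gamma(t)$ lies in $M\setminus N$ for all but isolated $t$ unless it meets $N$; if $\gamma$ stayed entirely in one component, then its antipodal image (realized by the $\pi$-shift $\gamma(t+\pi) = \psi(\gamma(t))$, since $\psi$ commutes with geodesic flow being a Clifford Wolf translation by Lemma \ref{lemma-2-0}) would lie in the other component by part (3), forcing $\gamma$ itself to pass from one component to the other over the interval of length $\pi$, hence to cross the boundary $N$. This intermediate-value argument along the geodesic, combined with parts (2)--(3), closes the proof.
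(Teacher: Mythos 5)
Your treatment of parts (2), (3) and (4) follows essentially the same route as the paper: fix $x\in N$, use that $\mathrm{Exp}_x$ maps the $\pi$-ball onto $M$ (collapsing the boundary to $x^*$) and maps $B^n_o(\pi)\cap T_xN$ onto the component of $N$ through $x$, so the two open half-spaces of $T_xM\setminus T_xN$ sweep out the two components of the complement; then observe that a geodesic segment $c(t)$, $t\in(0,\pi)$, leaving $x$ off of $N$ lies in one component while its $\psi$-image $c(t)$, $t\in(\pi,2\pi)$, lies in the other, which gives both the exchange property and (by your intermediate-value argument, which is what the paper intends) part (4). That part of the proposal is fine.

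The genuine gap is in part (1). Your stated reason that a second component is impossible --- that ``$\psi$ has no invariant proper nonempty subset of such a component'' and that a second component ``would have to meet the first through the conjugate-point identifications'' --- is not an argument: $\psi$ restricted to a component has many invariant subsets, and nothing in the conjugate-point structure by itself forbids two disjoint totally geodesic $(n-1)$-spheres a priori. Moreover you place (1) first, before the tools that actually prove it are available. The paper's (correct) logic runs the other way: carry out (2), (3), (4) for a single connected component $N'$ of $N$, and then note that if $N$ had a second component $N''$, any geodesic lying in $N''$ (such geodesics exist and stay in $N''$ by total geodesy) is not contained in $N'$, hence by part (4) must intersect $N'$ --- impossible since $N$ is imbedded and the components are disjoint. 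Your own part (4) supplies exactly this; you only need to reorder the argument and replace your justification of (1) by this application of (4) to a component.
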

\begin{proof}
Let $x$ be a point on $N$, $x^*=\psi(x)$ and $N'$ be the connected component of $N$
containing $x$. Then the exponential map $\mathrm{Exp}_x$ is a surjective map from
$(B^n_o(\pi),S^{n-1}_o(\pi))$ to $(M,x^*)$, and at the same time, it is surjective from $(B^n_o(\pi)\cap T_xN, S^{n-1}_o(\pi)\cap T_xN)$ to $(N',x^*)$. So $M\backslash N'$ has two connected components.

To see the action of $\psi$ on these two components, we only need to look at a geodesic $c(t)$ with $c(0)=x$ which is not
contained in $N'$. The open geodesic segment $c(t)$ with $t\in (0,\pi)$ is contained in one component of $M\backslash N'$. Its
$\psi$-image, the geodesic segment $c(t)$ with $t\in (\pi,2\pi)$, is contained in another connected component of $M\backslash N'$.
So the action of $\psi$ exchanges the two components of $M\backslash N'$. At the same time, we see any geodesic of $M$, if it is not contained in $N'$, must intersect with $N'$.

Finally, we prove that $N$ can not have other connected components. Assume conversely that $N$ has another component $N''$, then any geodesics in $N''$ has no intersection with $N'$. This is a contradiction because $N$ is imbedded.
\end{proof}

The fixed point set of a family of isometries (or more specially, the common zero set of a family of Killing vector fields) has more speciality than general totally geodesic sub-manifolds.
For example, the following lemma tells us in many cases
$N$ must be connected.

\begin{lemma}\label{lemma-3-4}
Let $(M,F)=(S^n,F)$ be a Finsler sphere with $n>1$ and $K\equiv1$, and $N$ the fixed point set of a family of isometries of $(M,F)$. Then $N$ must satisfy one of the following:
\begin{description}
\item{\rm (1)} $N$ is a two-points $\psi$-orbit, i.e. $N=\{x',x''\}$ with $d_F(x',x'')=d_F(x'',x')=\pi$.
\item{\rm (2)} $N$ is a reversible closed geodesic.
\item{\rm (3)} $(N,F|_N)$ is a Finsler sphere with $\dim N>1$ and $K\equiv 1$.
\end{description}
\end{lemma}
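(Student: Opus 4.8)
The plan is to first pin down the global shape of $N$, and then read off the three alternatives from its dimension. Since $N$ is the fixed point set of a family of isometries, by the remark preceding Subsection 2.5 it is a closed imbedded totally geodesic submanifold, so once its connectedness and dimension are understood the classification is automatic. Assume $N\neq\emptyset$ (which holds in all the intended applications) and fix $x\in N$, writing $x^*=\psi(x)$. Because $\psi$ lies in the center of $I(M,F)$ by Lemma \ref{lemma-2-0}, it commutes with every isometry of the given family, whence $\psi(N)=N$; and since $\psi$ moves every point a distance $\pi$, we have $x\neq x^*$ and $x^*\in N$, so $N$ contains at least two points. The whole argument will rest on the equivariance of the exponential map: for any isometry $\varphi$ with $\varphi(x)=x$ one has $\varphi\circ\mathrm{Exp}_x=\mathrm{Exp}_x\circ d\varphi_x$.

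The core claim I would establish is $N\subseteq N'\cup\{x^*\}$, where $N'$ is the connected component of $N$ containing $x$. Let $V=\{u\in T_xM : d\varphi_x(u)=u\text{ for every }\varphi\text{ in the family}\}$ be the isotropy fixed subspace. For $u\in V$ the geodesic $t\mapsto\mathrm{Exp}_x(tu)$ is fixed pointwise by the family, so it lies in $N$ and, being connected to $x$, in $N'$; thus $\mathrm{Exp}_x(V\cap B^n_o(\pi))\subseteq N'$, and in fact $V=T_xN'$. Conversely take $y\in N$ with $y\neq x^*$. By the homeomorphism induced by $\mathrm{Exp}_x$ (pointed out in \cite{BFIMZ2017}; compare the chart construction in the proof of Lemma \ref{lemma-3-1}), the map $\mathrm{Exp}_x$ is a bijection from the open ball $\{u: F(u)<\pi\}$ onto $M\setminus\{x^*\}$, so there is a unique $u$ with $F(u)<\pi$ and $\mathrm{Exp}_x(u)=y$. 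Each $\varphi$ fixes $y$, so $\mathrm{Exp}_x(d\varphi_x u)=\mathrm{Exp}_x(u)$ and uniqueness forces $d\varphi_x u=u$, i.e. $u\in V$ and hence $y\in N'$. This proves the claim.

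With the core claim the classification splits by $\dim N'$. If $\dim N'=0$ then $V=\{0\}$, so the claim leaves no room for points other than $x$ and $x^*$, giving $N=\{x,x^*\}$; since $\psi$ is fixed point free and preserves $N$ it must interchange the two points, so this is a genuine $\psi$-orbit with $d_F(x,x^*)=d_F(x,\psi(x))=\pi$ and $d_F(x^*,x)=d_F(x^*,\psi(x^*))=\pi$, which is case (1). If $\dim N'\geq 1$, choose a unit vector $u\in V$; the geodesic $\mathrm{Exp}_x(tu)$ stays in $N'$ and reaches $x^*$ at $t=\pi$, so $x^*\in N'$ and therefore $N=N'$ is connected. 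A connected compact one-dimensional totally geodesic submanifold is a single reversible closed geodesic (Subsection 2.4), which is case (2); and when $\dim N>1$, Lemma \ref{lemma-3-2} shows $(N,F|_N)$ is a Finsler sphere with $K\equiv 1$, which is case (3).

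The main obstacle is the injectivity of $\mathrm{Exp}_x$ on the open ball $\{F(u)<\pi\}$: this is the geometric input that converts a fixed point $y\neq x^*$ into a fixed tangent vector, and it is precisely where $K\equiv 1$ enters, through the fact that along every unit speed geodesic the first conjugate point occurs exactly at distance $\pi$ and $\mathrm{Exp}_x$ induces a homeomorphism onto $M$. Everything else reduces to equivariance of the exponential map together with the centrality and the fixed-point-freeness of $\psi$; the only supplementary point requiring care is the identification $V=T_xN'$ of the isotropy fixed subspace with the tangent space of the fixed point set, which again follows from the same equivariance.
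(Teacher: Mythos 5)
Your proposal is correct and follows essentially the same route as the paper: both arguments hinge on the fact that any fixed point $y\neq x^*$ lies at distance less than $\pi$ from $x$ and is therefore joined to $x$ by a unique (hence pointwise fixed) geodesic, which forces $N$ to be connected apart from the possible extra point $x^*$, after which the three cases are read off from $\dim N$ together with Lemma \ref{lemma-3-2} and the fixed-point-freeness of $\psi$. Your version merely makes the same idea explicit via equivariance of $\mathrm{Exp}_x$ and the identification of the isotropy fixed subspace with $T_xN'$, which is a slightly more detailed but equivalent formulation.
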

\begin{proof}
Whenever $N$ contains two points $x$ and $x'$ such that $d_F(x,x')\neq\pi$, there exists a unique shortest geodesic
$\gamma$ from $x$ to $x'$, which is also contained in the fixed point set $N$. Repeating this argument, we can see
$N$ must be connected whenever it contains more than two points. The cases (2) and (3) correspond to $\dim N=1$ and $\dim N>1$ respectively.
Finally we consider the case $\dim N=0$. By Lemma \ref{lemma-2-0}, for any $x\in N$,  $x^*=\psi(x)$ is another point in $N$. So $N$ contains at least two points. It can not contain the third, thus we must have $\psi(x^*)=x$.
\end{proof}

\subsection{The case when there exist only finite prime closed geodesics}

From now on, we further assume

{\bf Assumption (F).} $(M,F)$ has only finite prime closed geodesics.

The antipodal map $\psi$ can not be of
finite order, i.e. there does not exist an integer $k$ such
that $\psi^k=\mathrm{id}$, otherwise by the property of Clifford Wolf translation, or as argued in \cite{BFIMZ2017}, each orbit of $\psi$ is
contained in a closed geodesic, which is  a contradiction
to Assumption (F). The following lemma indicates there can not be too many finite $\psi$-orbits in $(M,F)$.

\begin{lemma}\label{lemma-3-5}
Let $(M,F)=(S^n,F)$ with $n>1$ be a Finsler sphere with $K\equiv 1$ and only finite prime closed geodesics.
If the union $N$ of all finite $\psi$-orbits is not empty, then it
must be one of the following:
\begin{description}
\item{\rm (1)} $N$ is a two-points $\psi$-orbit.
\item{\rm (2)} $N$ is a reversible closed geodesic.
\end{description}
Further more, in case (2), the length of $N$ in each direction is a rational multiple of $\pi$. If for either direction the length of $N$ is $l=p\pi/q$ with $p,q\in \mathbb{Z}$, then $N$ is the fixed point set of
    $\psi^q$.
\end{lemma}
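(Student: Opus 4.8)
The plan is to write $N=\bigcup_{q\ge 1}\mathrm{Fix}(\psi^q)$, to classify each fixed-point set by Lemma \ref{lemma-3-4}, and then to show that all the pieces are forced to collapse onto a single one. Since $\psi$ is a Clifford Wolf translation moving every point a distance $\pi>0$, it has no fixed point; hence every point of $N$ has minimal $\psi$-period at least $2$, and $x\in N$ precisely when $\psi^q(x)=x$ for some $q$, so $N$ is indeed the above union. Each $\mathrm{Fix}(\psi^q)$ is the common fixed-point set of the cyclic group generated by the isometry $\psi^q$, so Lemma \ref{lemma-3-4} applies and shows it is (a) a two-point $\psi$-orbit, (b) a reversible closed geodesic, or (c) a Finsler sphere of dimension $>1$ with $K\equiv 1$.

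The first real step is to exclude alternative (c). Suppose $N'=\mathrm{Fix}(\psi^q)$ were such a subsphere. It is a connected closed imbedded totally geodesic submanifold, so Lemma \ref{lemma-3-2} gives that $(N',F|_{N'})$ is a Finsler sphere with $K\equiv 1$; moreover its geodesics are geodesics of $M$, along which conjugate points occur at distance $\pi$, so the antipodal map of $(N',F|_{N'})$ is exactly the restriction $\psi|_{N'}$. As $\psi$ commutes with $\psi^q$ it preserves $N'$, and $\psi^q|_{N'}=\mathrm{id}$, so this antipodal map has finite order. By the argument given immediately before the lemma this forces every orbit of $\psi|_{N'}$ to lie on a closed geodesic of $N'$; since $\dim N'\ge 2$, no finite collection of closed geodesics can cover $N'$, so $N'$ --- and therefore $M$, these being geodesics of $M$ as well --- would carry infinitely many prime closed geodesics, contradicting Assumption (F). Hence every nonempty $\mathrm{Fix}(\psi^q)$ is of type (a) or (b).

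I would then exploit the nesting $\mathrm{Fix}(\psi^q)\subseteq\mathrm{Fix}(\psi^{qk})$ together with two elementary facts: a type (a) set has only two points, and a closed geodesic, viewed as a subset of $M$, cannot properly contain another. If some nonempty $\mathrm{Fix}(\psi^{q_1})$ and some point $x\in N\setminus\mathrm{Fix}(\psi^{q_1})$, say $x\in\mathrm{Fix}(\psi^{q_2})$, both occur, then $\mathrm{Fix}(\psi^{q_1q_2})$ contains at least three points, so it must be a single closed geodesic $\gamma$; comparing any further point $y\in N$ against $\gamma$ in the same way shows $y\in\gamma$, whence $N=\gamma$ is a reversible closed geodesic, giving (2). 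In the complementary case every nonempty $\mathrm{Fix}(\psi^q)$ is a two-point set, and the same combining trick forces them all to coincide with one and the same $\psi$-orbit, so $N$ is a single two-point $\psi$-orbit, giving (1). For the final assertion in case (2), I would parametrize $\gamma$ by $F$-arclength as $c(s)$, with period the length $l$ in that direction, and use $\gamma$ itself as the geodesic defining $\psi$; this gives $\psi(c(s))=c(s+\pi)$, so $\psi|_\gamma$ is a rotation of the circle through arclength $\pi$. From $N=\mathrm{Fix}(\psi^Q)$ one gets $Q\pi\in l\mathbb{Z}$, which forces $l\in\pi\mathbb{Q}$ in both directions; reading off the smallest power of this rotation equal to the identity from $l=p\pi/q$ then identifies $N$ with $\mathrm{Fix}(\psi^q)$.

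The step I expect to be the main obstacle is the collapsing argument, because the union defining $N$ runs over infinitely many exponents and one must be sure that no $\psi$-periodic point escapes the single piece; the leverage is precisely that a type (a) set cannot absorb a third point and a closed geodesic is rigid, so the nesting and Assumption (F) do the bookkeeping. The other delicate point is the identification of $\psi|_{N'}$ with the antipodal map of the subsphere, which is what lets me recycle the global finiteness argument one dimension down.
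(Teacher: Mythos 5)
Your argument is correct and follows essentially the same route as the paper: decompose $N$ into the fixed-point sets of the powers $\psi^q$, classify each one via Lemma \ref{lemma-3-4}, kill the case of a subsphere of dimension $>1$ by observing that its antipodal map is $\psi$ restricted to it and would have finite order (hence infinitely many closed geodesics), and collapse the remaining pieces using the nesting of $\mathrm{Fix}(\psi^{q_1})\cup\mathrm{Fix}(\psi^{q_2})$ inside $\mathrm{Fix}(\psi^{q_1q_2})$. The one loose point is the very last step: for a rotation by $\pi$ on a circle of circumference $l=p\pi/q$ the smallest power equal to the identity is $p/\gcd(p,q)$ (the numerator in lowest terms), not $q$, so ``reading off'' the exponent does not literally yield $\mathrm{Fix}(\psi^q)$ --- but the paper's own proof is no more careful on this normalization of $p$ and $q$, so this is a defect of the statement rather than of your argument.
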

\begin{proof} For each integer $q>1$,
denote $N_q$ the fixed point set of $\psi^q$. Each non-empty $N_{q}$ must satisfy
$\dim N_q<2$, otherwise there will be infinitely many closed geodesics. If there are some $N_q$ with $\dim N_q=1$, we
may choose $q$ to be the smallest, then we claim

 {\bf Claim 1}. $N_{q}=N_{q'}$ when
$q'$ is a multiple of $q$, and $N_{q'}$ is empty otherwise.

Now we prove the first statement in Claim 1. Obviously we have $N_q\subset N_{q'}$ if $q'$ is a multiple of $q$. We assume conversely that $N_{q'}\backslash N_q$ is non-empty, then by Lemma \ref{lemma-3-4}, $\dim N_{q'}>1$ and we can find infinite prime closed geodesics on $N_{q'}$. This is a contradiction. So we must have $N_{q'}=N_q$ in this case.

Next we prove the second statement in Claim 1.
Notice that $N_q$ is a closed reversible geodesic,
on which each point $x$ satisfies
$$\psi(x)\neq x,\ldots,\psi^{q-1}(x)\neq x,\psi^q(x)=x.$$
We can easily see that in each direction the length of $N_q$ can be presented as $p\pi/q$ for some positive integer $p$.

When $q'$ is not a multiple of $q$, $N_k\cap N_{q'}=\emptyset$. If $N_{q'}\neq\emptyset$,  $N_{qq'}$ contains both $N_{q}$ and $N_{q'}$. By Lemma \ref{lemma-3-4}, $(N_{qq'},F|_{N_{qq'}})$ is  a Finsler sphere with $\dim N_{qq'}>1$, $K\equiv 1$ and only finite prime closed geodesics. The restriction of $\psi^{qq'}$
to $N_{qq'}$ is the identity map, so there exist infinite prime
closed geodesics, which is a contradiction to Assumption (F).

This ends the proof of Claim 1 as well as Lemma \ref{lemma-3-5} when some $N_q$ is a geodesic.

Now we consider the case that no $N_k$ has a positive
dimension. Using the observation $N_{k'}\cup N_{k''}\subset N_{k'k''}$ and Lemma \ref{lemma-3-4}, we see that the only non-empty
$N_k$'s consist of the same pair of points $x'$ and $x''$
satisfying $d_F(x',x'')=d_F(x'',x')=\pi$.

This ends the proof of Claim 1 when all $N_q$'s are discrete.
\end{proof}

The group that $\psi$ generates in
$I(M,F)$ is not closed. So $I(M,F)$ is
a compact Lie group of positive dimension. We denote
$\mathfrak{g}=\mathrm{Lie}(I(M,F))$, which is the space of
all Killing vector fields of $(M,F)$.

Because of Assumption (F), an isometry of $(M,F)$ preserves each closed geodesic, and a Killing vector field is tangent to each closed geodesic. For any closed geodesic $c=c(t)$ of constant speed, the restriction of a Killing vector field $X$ to $c(t)$ must satisfy
\begin{equation}\label{002}
X(c(t))=\rho_{X,c}\dot{c}(t),\quad\forall t\in\mathbb{R},
\end{equation}
where $\rho_{X,c}$ is a real number which only depends on
the Killing vector field $X$ and the geodesic $c=c(t)$.

The following lemma indicates when $\rho_{X,c}$ vanishes for each closed geodesic $c=c(t)$, the Killing vector field $X$ must be the zero vector field.

\begin{lemma}\label{lemma-3-6}
Let $(M,F)=(S^n,F)$ be a Finsler sphere with $n>1$, $K\equiv 1$
and only finite prime closed geodesics. Then
any Killing vector field of $(M,F)$ which vanishes on
all closed geodesics must be the zero vector field.
\end{lemma}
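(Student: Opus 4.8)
The plan is to argue by contradiction: I suppose $X\neq 0$ is a Killing field vanishing on every closed geodesic and try to produce a closed geodesic on which $X$ does not vanish. It is cleaner to work with the whole space $\mathfrak{t}'$ of Killing fields that vanish on all closed geodesics; by (\ref{002}) this is the common kernel $\bigcap_c\ker\rho_{\cdot,c}$ of the linear functionals $Y\mapsto\rho_{Y,c}$, hence a linear subspace of $\mathfrak{g}$, and the hypothesis $X\neq 0$ means $\mathfrak t'\neq 0$. Let $T'$ be the torus generated by $\mathfrak t'$ in $I(M,F)$; for a generic $X\in\mathfrak t'$ one has $\overline{\exp(\mathbb{R}X)}=T'$, so the zero set $Z=Z(X)$ coincides with $\mathrm{Fix}(T')$, the common zero set of $\mathfrak t'$. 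Since every element of $\mathfrak t'$ vanishes on every closed geodesic, $Z$ contains all closed geodesics of $M$.

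First I would pin down the shape of $Z$ using Lemma \ref{lemma-3-4}. A compact Finsler manifold always carries at least one closed geodesic, so $Z$ contains a circle; this excludes the two-point case, and $Z$ is either a reversible closed geodesic or a totally geodesic subsphere with $\dim Z>1$, which by Lemma \ref{lemma-3-2} is again a Finsler sphere with $K\equiv1$ and only finitely many closed geodesics. If $\dim Z=n$ then $Z=M$ and $X\equiv 0$, a contradiction, so I may assume $Z$ is proper. Moreover $T'$ acts on the normal bundle $\nu(Z)$ by rotations with no nonzero fixed vector (otherwise $Z$ would be larger), which forces the rank $k=n-\dim Z$ to be even; in particular $k\geq 2$ and $\dim Z\leq n-2$. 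This already settles the base of an induction on $n$: when $n=2$ the only possibility is $\dim Z=0$, which cannot contain the $1$-dimensional closed geodesic.

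The heart of the matter is to exhibit a closed geodesic that leaves $Z$. Two mechanisms are available. The first is a maximum-point argument: the function $f(\cdot)=F(X(\cdot))^2$ is $T'$-invariant and attains a positive maximum at some $x_0\notin Z$, so by Lemma \ref{lemma-2-1} the integral curve of $X$ through $x_0$ is a geodesic $\gamma$ along which $f\equiv\max f$; were $\gamma$ closed it would be a closed geodesic with $X|_\gamma\neq 0$, the desired contradiction. The second is geometric: since $\psi$ commutes with $T'$ it preserves $Z$ and acts there as the antipodal map of $(Z,F|_Z)$, while $\psi_*$ rotates the fibers of $\nu(Z)$; a normal geodesic $\gamma_v$ with $v\in\nu_xZ$ satisfies $\gamma_v(k\pi)=\psi^k(x)$ and $\dot\gamma_v(k\pi)=\psi^k_*v$, hence closes up exactly when $x$ is a periodic point of $\psi$ and $v$ is fixed by the corresponding power of $\psi_*$ on $\nu_x$. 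Such a point $x$ I would seek from Lemma \ref{lemma-3-5} together with a Lefschetz computation for the powers $\psi^k$ (which have degree $(-1)^{k(n+1)}$), guaranteeing finite $\psi$-orbits in the relevant parity cases; the even rank of $\nu(Z)$ and the $\psi$-invariance would then supply a fixed normal vector.

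The step I expect to be the main obstacle is precisely this closedness/periodicity issue. For a generic $X\in\mathfrak t'$ the geodesic orbit $\gamma$ need not be periodic: its closure is a subtorus orbit $O\subseteq M$ of possibly positive dimension $>1$, and the $K\equiv1$ geometry (as in the Clifford--Hopf picture on the round $S^3$) is perfectly consistent with such a torus being foliated by non-closed geodesics, so no closed geodesic need lie in $O$. Likewise a normal geodesic returns near $Z$ at every multiple of $\pi$ but closes only under an exact rationality condition on the rotation angles of $\psi_*$, which may fail. This is where Assumption (F) must be used decisively, and I would channel it through the induction: restricting $\mathfrak{g}$, $\psi$ and the set of closed geodesics to the lower-dimensional totally geodesic sphere $(Z,F|_Z)$ of Lemma \ref{lemma-3-2}, and combining the parity constraint $k=n-\dim Z\in 2\mathbb{Z}$ with the inductive control of closed geodesics on $Z$, so as to force either $\dim O=1$ or the existence of a genuine transverse closed geodesic. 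In either case one contradicts the fact that the proper set $Z$ contains every closed geodesic, and therefore $X$ must be the zero vector field.
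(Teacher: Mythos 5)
You start exactly where the paper does---the space $\mathfrak t'$ of Killing fields vanishing on all closed geodesics, the function $f(\cdot)=F(X(\cdot))^2$, its positive maximum, and Lemma \ref{lemma-2-1}---but you then stall at precisely the point you yourself flag as ``the main obstacle,'' and that obstacle is an artifact of your choice of $X$, not a real difficulty. You pick a \emph{generic} $X\in\mathfrak t'$, i.e.\ one with $\overline{\exp(\mathbb{R}X)}=T'$, and then correctly observe that the integral curve of such an $X$ through the maximum point of $f$ need not be periodic. The paper's proof avoids this entirely by choosing $X$ the other way: since $\mathfrak t'\neq 0$ generates a (closed, compact, connected) subgroup $G'$ of $I_0(M,F)$, one can pick a nonzero $X\in\mathfrak t'$ that generates a \emph{circle} subgroup $S^1\subset G'$ (such elements are dense in the Lie algebra of any compact connected group). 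For that $X$ every integral curve through a point where $X\neq 0$ is an $S^1$-orbit, hence automatically a closed curve; the maximum point $x$ of $f$ has $f(x)>0$ (else $X\equiv 0$), so by Lemma \ref{lemma-2-1} the integral curve of $X$ through $x$ is a closed geodesic on which $X$ does not vanish --- the desired contradiction, with no periodicity issue and no need for Assumption (F) beyond making the hypothesis meaningful.

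Everything you build to work around the gap --- the structure of $Z=\mathrm{Fix}(T')$ via Lemma \ref{lemma-3-4}, the even codimension of $Z$, the normal geodesics $\gamma_v$ with $\gamma_v(k\pi)=\psi^k(x)$, the Lefschetz count for powers of $\psi$, and the induction on $n$ --- is left as a sketch and never actually closes the argument: in the cases where $Z$ is a single reversible closed geodesic or a lower-dimensional subsphere, your structural analysis is internally consistent and produces no contradiction by itself, so the entire burden still rests on exhibiting a closed geodesic outside $Z$, which is exactly the unresolved step. So the proposal as written has a genuine gap; the repair is the one-line change of taking $X$ rational (circle-generating) rather than generic (densely winding) in $\mathfrak t'$, after which your first ``mechanism'' is the paper's proof and the second mechanism and the induction can be discarded.
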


\begin{proof}
Assume conversely that there exists a nonzero Killing
vector field of $(M,F)$ which vanishes on all closed geodesics. Then all the Killing vector fields satisfying
this requirement generate a nontrivial closed connected subgroup $G'$ of $I_0(M,F)$. We can find a nonzero Killing vector field $X$ in $\mathfrak{g}'=\mathrm{Lie}(G')$, which generates an $S^1$-subgroup. Assume the function
$f(\cdot)=F(X(\cdot))^2$ achieves its maximum at $x\in M$.
Then by Lemma \ref{lemma-2-1}, the integration curve $c(t)$ of $X$
passing $x$ is a closed geodesic. But $X$ does  not vanish
on $c(t)$. This is a contradiction.
\end{proof}

Using Lemma \ref{lemma-3-6}, it is not hard to prove Theorem
\ref{main-thm-0}.

\noindent{\bf Proof of Theorem \ref{main-thm-0}.}
We first prove $I_0(M,F)$ is a torus.

Let $\mathfrak{g}'$ be the semi-simple factor of $I_0(M,F)$. For
each closed geodesic $c(t)$, the number $\rho_{X,c}$ in (\ref{002}) defines
a one dimensional real representation of $\mathfrak{g}=\mathrm{Lie}(I(M,F))$. Its restriction to $\mathfrak{g}'$ is trivial. So each Killing vector field $X$
in $\mathfrak{g}'$ vanishes on all closed geodesics. By Lemma
\ref{lemma-3-6}, $X$ must be zero, i.e. $\mathfrak{g}'=0$ and
$I_0(M,F)$ is a torus.

Next we prove $0<\dim I_0(M,F)\leq [\frac{n+1}{2}]$.

We have already seen $\dim I_0(M,F)>0$. The proof of Lemma \ref{lemma-3-6} shows that we can find a closed geodesic $c(t)$ such that
$\mathfrak{g}_c=\{X|\rho_{X,c}=0\}$ is a codimension one subspace
in $\mathfrak{g}$. The subalgebra $\mathfrak{g}_c$ generates the closed subgroup $G'$ of $I_0(M,F)$ which fixes each point of the geodesic $c(t)$. At $x=c(0)$, the action of $G'$ preserves the tangent subspace
$$\mathbf{V}=\{u|\langle u,\dot{c}(0)\rangle_{\dot{c}(0)}^F=0\}\subset T_xM$$
as well as the inner product $g_x^F$. This action is effective,
 making $G'$ to be a torus subgroup of $SO(n-1)$. So we have
$$\dim I_0(M,F)=\dim G' +1\leq\mathrm{rank}SO(n-1)+1=[\frac{n+1}{2}].$$

This ends the proof of Theorem \ref{main-thm-0}.
{\ \rule{0.5em}{0.5em}}

As a contrast with Lemma \ref{lemma-3-6}, we also have
the following lemma.
\begin{lemma}\label{lemma-3-7}
Let $(M,F)=(S^n,F)$ be a Finsler sphere with $n>1$, $K\equiv 1$
and only finite prime closed geodesics, $N$ the fixed point set of $I_0(M,F)$, or equivalently the common zero set of all Killing vector fields. Assume $N$ is not empty, then it must be the fixed point set of some $\psi^k$, which is listed in Lemma \ref{lemma-3-5}, i.e. a two-points $\psi$-orbit, or a reversible closed geodesic.
\end{lemma}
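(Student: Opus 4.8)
The plan is to squeeze $N$ between two copies of the set described in Lemma \ref{lemma-3-5}. Write $T=I_0(M,F)$, so that $N=\mathrm{Fix}(T)$ is by hypothesis the common zero set of all Killing vector fields, and let $W$ denote the union of all finite $\psi$-orbits in $M$. By Lemma \ref{lemma-3-5}, as soon as $W$ is nonempty it is either a two-points $\psi$-orbit or a reversible closed geodesic, and in both situations $W=\mathrm{Fix}(\psi^q)$ for a suitable $q$. So it suffices to prove the two inclusions $N\subseteq W$ and $W\subseteq N$, which together give $N=W=\mathrm{Fix}(\psi^q)$ of the asserted form.

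For the inclusion $N\subseteq W$, the key observation is that $\psi$ has finite order modulo $T$. Since $M=S^n$ is compact, $I(M,F)$ is a compact Lie group, so its component group $I(M,F)/T$ is finite; hence the coset of $\psi$ has some finite order $k$, that is, $\psi^k\in T$. Because $N=\mathrm{Fix}(T)$ and $\psi^k\in T$, every point of $N$ is fixed by $\psi^k$ and therefore has a $\psi$-orbit of size dividing $k$. Thus $N\subseteq W$; in particular $W\neq\emptyset$, so Lemma \ref{lemma-3-5} applies and $W$ is one of the two listed types.

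For the reverse inclusion $W\subseteq N$ I would use that $\psi$ lies in the center of $I(M,F)$ (Lemma \ref{lemma-2-0}): for any isometry $g$ and any $j$ we have $\psi^j(g(x))=g(\psi^j(x))$, so $g$ preserves each $\mathrm{Fix}(\psi^j)$ and hence preserves $W$. In particular the connected group $T$ acts on $W$. If $W$ is a two-point set, connectedness forces $T$ to fix both points, so $W\subseteq\mathrm{Fix}(T)=N$. If $W$ is a reversible closed geodesic $c$, then by (\ref{002}) every Killing vector field $X$ restricts to $c$ as $X(c(t))=\rho_{X,c}\dot c(t)$; since $\dot c\neq 0$, a point of $c$ lies in the common zero set $N$ exactly when $\rho_{X,c}=0$ for all $X$. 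Consequently $N\cap c$ is either all of $c$ (if every $\rho_{X,c}=0$) or empty (if some $\rho_{X,c}\neq 0$, so that $X$ is nowhere zero on $c$). As $N\subseteq W=c$ and $N\neq\emptyset$, the latter is excluded, so $N\cap c=c$, i.e. $W\subseteq N$.

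The step I expect to be the main obstacle is precisely this last one in the circle case: a priori the torus $T$ could translate the closed geodesic $W$ nontrivially, in which case $\mathrm{Fix}(T)\cap W$ would be empty and the argument would collapse. The resolution is to combine the standing hypothesis $N\neq\emptyset$ with the already established inclusion $N\subseteq W$; together with the rigid description (\ref{002}) of Killing fields along a closed geodesic, these force all the coefficients $\rho_{X,c}$ to vanish and hence $T$ to fix $W$ pointwise. Once $W\subseteq N$ is secured we obtain $N=W=\mathrm{Fix}(\psi^q)$, which is a two-points $\psi$-orbit or a reversible closed geodesic as listed in Lemma \ref{lemma-3-5}.
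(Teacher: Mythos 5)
Your argument is correct and follows essentially the same route as the paper: finiteness of the component group $I(M,F)/I_0(M,F)$ gives $\psi^k\in I_0(M,F)$ and hence $N\subseteq\mathrm{Fix}(\psi^k)$, after which Lemma \ref{lemma-3-5} and the two-point/geodesic dichotomy yield the reverse inclusion. Your use of (\ref{002}) together with $N\neq\emptyset$ to force all $\rho_{X,c}=0$ in the geodesic case is exactly the step the paper dismisses as ``obvious,'' so you have merely spelled it out.
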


\begin{proof}
The isometry group $I(M,F)$ is a compact Lie group, so $I(M,F)/I_0(M,F)$ is finite. If the fixed point set $N$ of
$I_0(M,F)$ is not empty, then it is contained in the fixed point set $N_k$ of $\psi^k$, where $k=|I(M,F)/I_0(M,F)|$. By Lemma \ref{lemma-3-5}, $N_k$ is either a two-points $\psi$-orbit
or a reversible geodesic. When $N_k$ is a geodesic, it is
obvious to see that all Killing vector field vanish on $N_k$, i.e. $N_k=N$. When $N_k$ consists of two points, we also have
$N_k=N$ because $N$ consists of $\psi$-orbits.
\end{proof}

At the end of this section, we remark that the prime closed geodesics of $(M,F)$ are imbedded curves which do not intersect each other when they are geometrically distinct.

\begin{lemma}\label{lemma-3-8}
Let $(M,F)=(S^n,F)$ with $n>1$ be a Finsler sphere with $K\equiv 1$ and only finite prime closed geodesics.
Then any prime closed geodesic has no self intersections. If two prime closed geodesics are geometrically distinct, then they do not
intersect.
\end{lemma}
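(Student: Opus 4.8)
The plan is to run everything off the single structural identity (\ref{002}): every Killing vector field $X$ restricts to a closed geodesic $c$ as a constant multiple of its velocity, $X(c(t))=\rho_{X,c}\dot{c}(t)$. The argument then splits according to a clean dichotomy, which I would record first: for a given prime closed geodesic $c$, either (a) there is a Killing field $X$ with $\rho_{X,c}\neq 0$, or (b) every Killing field vanishes identically on $c$, in which case $c$ lies in the common zero set $N$ of all Killing fields, that is, the fixed point set of $I_0(M,F)$.

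For the self-intersection statement I would treat the two cases separately. In case (b), Lemma \ref{lemma-3-7} says $N$ is either a two-point $\psi$-orbit or a reversible closed geodesic; since $N\supseteq c$ is at least one-dimensional, $N$ is a reversible closed geodesic, and being a fixed point set it is a closed imbedded submanifold, hence an imbedded circle. A prime closed geodesic contained in this imbedded circle must coincide with it and be traversed once, so $c$ has no self-intersection. In case (a), fix $X$ with $r:=\rho_{X,c}\neq 0$; then $X$ is nonvanishing along $c$, and since $\frac{d}{dt}c(t_1+rt)=r\dot{c}(t_1+rt)=X(c(t_1+rt))$, the reparametrized curve $t\mapsto c(t_1+rt)$ is exactly the integral curve of $X$ through $c(t_1)$. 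By uniqueness of integral curves its minimal periodic orbit is an imbedded circle, and since $c$ is prime it traverses this circle exactly once, so again $c$ has no self-intersection.

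For two geometrically distinct prime closed geodesics $c,c'$, suppose they met at $p=c(t_1)=c'(t_1')$. In case (a), let $\phi_t=\exp(tX)$ be the isometry flow of a Killing field with $\rho_{X,c}\neq 0$. The identity $\phi_t(c(s))=c(s+\rho_{X,c}t)$ (and its analogue for $c'$) gives $\phi_t(p)=c(t_1+\rho_{X,c}t)=c'(t_1'+\rho_{X,c'}t)$. As $t$ runs over $\mathbb{R}$ the middle expression sweeps out all of the image of $c$; the right-hand expression is therefore nonconstant, which forces $\rho_{X,c'}\neq 0$ as well, and then it sweeps out all of the image of $c'$. Hence the two images coincide, contradicting geometric distinctness. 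In case (b), $p$ is a common zero of all Killing fields, so $\rho_{X,c'}\dot{c}'(t_1')=X(p)=0$ for every $X$ forces $\rho_{X,c'}=0$; thus $c'\subseteq N$ as well, and by the case-(b) analysis both $c$ and $c'$ equal the imbedded geodesic $N$, again a contradiction.

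The step I expect to be most delicate is the intersection with \emph{anti-parallel} velocities, where $\dot{c}'(t_1')$ is a negative multiple of $\dot{c}(t_1)$. A naive attempt to conclude that all Killing fields vanish at $p$ fails there, since (\ref{002}) only forces a proportionality between $\rho_{X,c}$ and $\rho_{X,c'}$ rather than their vanishing; this is precisely the reversible configuration, which is geometrically permitted. The flow argument of case (a) is what handles it uniformly, correctly detecting that anti-parallel contact means $c$ and $c'$ trace the same set with opposite orientation. I would also stress that Assumption (F) is indispensable and enters exactly through (\ref{002}): on the standard Riemannian sphere distinct great circles \emph{do} intersect, and there the Killing fields fail to be tangent to each geodesic, so no analogue of (\ref{002}) is available.
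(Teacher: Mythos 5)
Your proof is correct and takes essentially the same route as the paper: both arguments rest on the identity (\ref{002}) and on identifying the fixed point set of $I_0(M,F)$ via Lemma \ref{lemma-3-7}. The paper simply asserts that a self-intersection (resp.\ an intersection of two geometrically distinct geodesics) forces the $I_0(M,F)$-action on the geodesic(s) to be trivial and then invokes Lemma \ref{lemma-3-7}; your case (a) flow argument supplies the detail behind that assertion, in particular handling the anti-parallel tangency configuration that the paper's one-line claim glosses over.
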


\begin{proof} To prove the first statement, we
assume conversely
 that the closed geodesic $\gamma$ has self intersections.
 Then the action of $I_0(M,F)$ on $\gamma$ is trivial. By Lemma \ref{lemma-3-7}, $\gamma$
 is a connected totally geodesic sub-manifold, which can not have self intersection. This is a contradiction.

 To prove the second statement, we assume conversely
 two geometrically distinct closed geodesics $\gamma_1$ and $\gamma_2$ intersect. Then the action of $I_0(M,F)$ on
 $\gamma_1\cup\gamma_2$ is trivial. It implies that the fixed point set of $I_0(M,F)$ is a Finsler sphere $N$ with $K\equiv 1$ and $\dim N>1$. This is a contradiction with Lemma \ref{lemma-3-7}.
\end{proof}

\section{Proof of Theorem \ref{main-thm-1}}
Before proving Theorem \ref{main-thm-1}, we observe that
a weaker statement follows Lemma \ref{lemma-3-6} easily.

\begin{lemma}\label{lemma-4-1}
Let $(M,F)=(S^n,F)$ be a Finsler sphere with $n>1$, $K\equiv 1$ and only finite prime closed geodesics.
Then the number of geometrically distinct closed geodesics on $M$ is at least $\dim I(M,F)$.
\end{lemma}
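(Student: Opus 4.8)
The plan is to convert the finiteness of prime closed geodesics into a dual-space counting argument, with all the geometric content supplied by Lemma \ref{lemma-3-6}. Write $\mathfrak{g}=\mathrm{Lie}(I(M,F))$, and let $c_1,\dots,c_m$ be the geometrically distinct closed geodesics, which are finite in number by Assumption (F). For each index $i$, equation (\ref{002}) attaches to every Killing field $X\in\mathfrak{g}$ the scalar $\rho_{X,c_i}$ determined by $X(c_i(t))=\rho_{X,c_i}\dot{c}_i(t)$. First I would check that the assignment
$$\lambda_i:\mathfrak{g}\to\mathbb{R},\qquad X\mapsto\rho_{X,c_i},$$
is linear: the restriction $X\mapsto X|_{c_i}$ is linear, and reading off the coefficient of the fixed nonvanishing field $\dot{c}_i$ along $c_i$ is again linear, so each $\lambda_i$ is a well-defined element of the dual space $\mathfrak{g}^*$.

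Next I would identify the common kernel $K=\bigcap_{i=1}^m\ker\lambda_i$. If $X\in K$, then $\rho_{X,c_i}=0$ for every $i$, and by (\ref{002}) this forces $X(c_i(t))\equiv 0$, i.e. $X$ vanishes on the image of each $c_i$. Since every closed geodesic of $(M,F)$ is geometrically one of the $c_i$ and $X$, being a Killing field, is tangent to it along the same point set, $X$ vanishes on every closed geodesic of $(M,F)$. Lemma \ref{lemma-3-6} then yields $X=0$, so $K=\{0\}$.

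Finally I would run the elementary linear algebra. A trivial common kernel of $m$ functionals on $\mathfrak{g}$ means $\mathrm{span}\{\lambda_1,\dots,\lambda_m\}=\mathfrak{g}^*$, whence
$$m\ \geq\ \dim\mathrm{span}\{\lambda_1,\dots,\lambda_m\}\ =\ \dim\mathfrak{g}^*\ =\ \dim\mathfrak{g}\ =\ \dim I(M,F),$$
which is exactly the asserted bound. The argument is short precisely because the hard analytic and geometric work is already packaged in Lemma \ref{lemma-3-6}; the only points requiring care are bookkeeping ones, namely confirming that the simultaneous vanishing of the $\rho_{X,c_i}$ really captures vanishing on the full geometrically distinct family of closed geodesics, and that $\dim I(M,F)=\dim\mathfrak{g}$ so that the count matches the stated quantity. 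I do not expect a genuine obstacle here, since Theorem \ref{main-thm-0} already guarantees $\dim I(M,F)>0$ and the dualization is routine.
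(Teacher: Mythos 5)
Your proposal is correct and is essentially the paper's own argument: the paper proves the same bound by contradiction, noting that fewer than $\dim I(M,F)$ geodesics would force the functionals $X\mapsto\rho_{X,c_i}$ to have a nonzero common kernel, contradicting Lemma \ref{lemma-3-6}. You have merely phrased the same linear-algebra count in the forward (dual-spanning) direction, so there is nothing substantive to add.
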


\begin{proof}
Assume conversely that the number of geometrically distinct closed geodesics on $M$ is less than $\dim I(M,F)$. Then there exists
a nonzero Killing vector field $X$, which restriction to each
geodesic is zero. This is a contradiction to Lemma \ref{lemma-3-6}.
\end{proof}

Now we use induction to prove Theorem \ref{main-thm-1}.

When $n=2$, it has been proved in \cite{BFIMZ2017}.

Now we assume Theorem \ref{main-thm-1} is valid for $k<n$ and prove the statement when $k=n$.

Firstly, we prove the claim

 {\bf Claim 2}. If $\dim I_0(M,F)=1$, then there exists
a closed reversible geodesic.

Let $X$ be a nonzero Killing vector field. We first prove Claim 2 when $X$ has a non-empty zero point set.

By Lemma \ref{lemma-3-7}, if the zero point set $N$ of $X$ is non-empty, it is either a closed reversible geodesic or a two-points $\psi$-orbit $\{x',x''\}$.
When $\dim N$ is a closed reversible geodesic, the proof for Claim 2 is done. Now we assume $N=\{x',x''\}$ is a two-points $\psi$-orbit. By Lemma \ref{lemma-2-1}, we can find a closed geodesic $\gamma$, passing the maximum point $x$ for the function $f(\cdot)=F(X(\cdot))^2$. So the restriction of $X$ to $\gamma$ is not zero.  We can find a suitable
isometry $\phi=\psi\exp tX$ which fixes each point of $\gamma$.
The isometry $\phi$ is not the identity map because it exchanges $x'$ and $x''$. By Lemma \ref{lemma-3-4}, the fixed point set $N$ of $\phi$ is either a reversible closed geodesic, or a Finsler
sphere with $1<\dim N<n$, $K\equiv 1$ and only finite prime closed geodesics. For the second case, we can use
the inductive assumption to find a reversible closed geodesic of $N$, as well as of $M$.

This ends the proof of Claim 2 when $X$ has a non-empty zero set.

Next we prove the Claim 2 assuming $X$ is nonvanishing everywhere. Let $c_i=c_i(t)$ with $t\in[0,t_i]$ and $c_i(0)=c_i(t_i)$, $1\leq i\leq m$, be all the prime closed geodesics such that $\dot{c}_i$ has the same direction as $X$. We may arrange them such that $t_1\leq\cdots\leq t_m$,
and we can parametrize each $c_i(t)$, such that $X(c_i(t))=\dot{c}_i(t)$ for all $t$.

If $t_i$'s are not all the same, the fixed point set $N$ of $\phi=\exp t_1 X$ contains the geodesic $c_1$ but not $c_m$. We just switch to $(N,F|_N)$, which is either a reversible geodesic in $M$ or a Finsler sphere with $1<\dim N<n$, $K\equiv 1$
and only finite prime closed geodesics. By the inductive assumption, we can find a reversible closed geodesic
for $N$, as well for $M$.

If the antipodal map $\psi$ is not contained in $I_0(M,F)$. For each closed geodesic $\gamma$, we can find a suitable isometry
$\phi=\psi\exp tX$ such that $\phi$ fixes each point of $\gamma$. Notice that $\phi$ is not the identity map because $\psi\notin I_0(M,F)$. Then we can switch to the fixed point set of
$\phi$. By the inductive assumption and similar argument as above, we can find a reversible closed geodesic.

Now we assume that all $t_i$'s are the same, and at the same time $\psi$ belongs to $I_0(M,F)$, i.e. $\psi=\exp tX$ for some suitable $t$. Let $t'_i\in(0,t_i)$ be the number such that $\psi(c_i(0))=c(t'_i)$. Then we can arrange these geodesics such that $t'_1\leq \cdots\leq t'_m$.

The case that all $t'_i$'s are the same number can not happen.
Assume conversely that all $t'_i$'s are the same. We consider
the function $f(\cdot)=F(X(\cdot))^2$ on $M$. Because $X$ is
nonvanishing everywhere, $f$ is a positive smooth function on $M$. By Lemma \ref{lemma-2-1}, the critical point set of $f(\cdot)$ is just the union of all the geodesics $c_i$'s.
On the other hand, our assumption implies  $f(\cdot)$ takes
the same positive value on these geodesics. So $f(\cdot)$ is
a constant function on $M$, i.e. $X$ is Killing vector field of constant length. Using Lemma \ref{lemma-2-1} again, each integration curve of $X$
is a closed geodesic. This is a contradiction.

We re-order these geodesics such that $t'_1\leq \cdots\leq t'_m$.
Then the isometry $\phi=\psi\exp (t_m-t'_m)X$ fixes the points on $\gamma_m$ but not those on $\gamma_1$. Using similar arugment and the
inductive assumption as above, we can find a reversible closed geodesic in the fixed point set of $\phi$. This ends the proof of the Claim 2.

Secondly, we assume $m=\dim I(M,F)>1$ and prove

{\bf Claim 3.} The number of geometrically distinct prime closed geodesics is at least $m-1$ when $m=\dim I(M,F)>1$.

Let $\{\gamma_1, \gamma_2, \ldots, \gamma_{m'}\}$ be
the set of all geometrically distinct closed geodesics.
By Lemma \ref{lemma-4-1}, $m'\geq m$. For each $i$, $1\leq i\leq m'$, denote $\mathfrak{g}_i$ the subspace of Killing vector fields which vanishes on $\gamma_i$. By Lemma \ref{lemma-3-6}, $\cap_{1\leq i\leq m'}\mathfrak{g}_i=0$. Each $\mathfrak{g}_i$ is either $\mathfrak{g}$ or a codimension one subspace of $\mathfrak{g}$. If some $\mathfrak{g}_i=\mathfrak{g}$,
We have already observed that that $\gamma_i$ is the fixed point set of some $\psi^k$, which is unique. By Lemma \ref{lemma-3-6}, we can re-arrange all the
geodesics, such that $\mathfrak{g}_i$'s for $1\leq i\leq m$ are
$(m-1)$-dimensional and their intersection is zero. We can find
a nonzero Killing vector field
$X\in\cap_{1\leq i\leq m-1}\mathfrak{g}_i$.

By Lemma \ref{lemma-3-4}, the zero set $N$ of $X$
is either a reversible closed geodesic, or a Finsler sphere with $1<\dim N<n$, $K\equiv 1$ and only finitely many prime closed geodesics. The first case only happens when $m=2$, the proof of Claim 3 is done because we have already found $m-1=1$ reversible closed geodesic. For the second case, we observe that
the restriction to $N$ is injective map from $\mathfrak{g}_m$ to $\mathrm{Lie}(I(N,F|_N))$, i.e. $\dim I(N,F|_N)\geq m-1$. So by
the inductive assumption, the number of geometrically distinct reversible prime closed geodesics for $N$, as well as for $M$, is at least $m-1$. This ends the proof of Claim 3.

Finally, we finish the proof by induction. We re-arrange all the geometrically distinct geodesics such that $\gamma_i$'s
with $1\leq i\leq m-1$ are reversible. Notice that any $\mathfrak{g}_k$
is contained in $\cup_{1\leq i\leq m-1}\mathfrak{g}_i$ iff
$\mathfrak{g}_k=\mathfrak{g}_i$ for some $i$, $1\leq i\leq m-1$.
Comparing the facts that $\dim\cap_{1\leq i\leq m-1}\mathfrak{g}_i> 0$ and $\cap_{1\leq i\leq m'}\mathfrak{g}_i=\emptyset$, we may assume $\mathfrak{g}_m$
is not contained in $\cup_{1\leq i\leq m-1}\mathfrak{g}_i$.
Choose a Killing vector field
$X\in\mathfrak{g}_m\backslash\cup_{1\leq i\leq m-1}\mathfrak{g}_i$, then by Lemma \ref{lemma-3-5} and the induction argument, the zero set of $X$ contains a prime closed geodesic which is geometrically different with those $\gamma_i$'s with
$1\leq i\leq m-1$. So there exists at least $m$ geometrically
distinct reversible closed geodesics.

This ends the induction and proves Theorem \ref{main-thm-1}.

\section{Proof of Theorem \ref{main-thm-2}}

In this section, we assume the dimension of $\mathfrak{g}= \mathrm{Lie}(I(M,F))$ is $[\frac{n+1}{2}]$, and
prove Theorem \ref{main-thm-2}.

\subsection{The case that $\dim M$ is odd}

We consider the case that $\dim M=n=2n'+1$ is odd, in which $n'>0$. In this case $\dim I(M,F)=n'+1$.

Firstly, we prove the following claim.

 {\bf Claim 4}.
All closed geodesics are reversible.

Let $c(t)$ be any closed geodesic and
$\mathfrak{g}'$ the space of all Killing vector fields which
 vanish on $c(t)$. Then $n'\leq\dim \mathfrak{g}'\leq n'+1$. It generates a torus subgroup $G'$ in $I_0(M,F)$ which action preserves the $g_y^F$-orthogonal complement $\mathbf{V}$ of $y=\dot{c}(0)$ in the tangent space $T_xM$ at $x=c(0)$.
This action defines an injective endomorphism from $G'$ to
$SO(2n')$, which image must be a maximal torus for dimension reason. At the same time, we get $\dim \mathfrak{g}'=n'$. There is an isometry $\phi\in G'$ which acts as $-\mathrm{id}$ on $\mathbf{V}$. So the geodesic $c(t)$ is a connected component of the fixed point set of $\phi$, which must be a reversible geodesic. This ends the proof of Claim 4.

Secondly, we consider the function $f(x)=F(X(x))^2$ on $M$ for
a Killing vector field $X$ which generates a closed subgroup $T=S^1$.
We claim

 {\bf Claim 5}. For a generic Killing vector field $X$, $f(\cdot)=F(X(\cdot))^2$ is a positive smooth function on $M$.

To prove this claim, we only need to observe that a generic $X$
is nonvanishing everywhere, then the smoothness follows easily.

Let $\gamma_i$ with $1\leq i\leq m'$ be all the geometrically distinct closed geodesics, and $\mathfrak{g}_i\subset\mathfrak{g}$ be the space of Killing vector fields which vanish on $\gamma_i$
respectively. The argument proving Claim 4 shows
$\dim\mathfrak{g}_i=\dim\mathfrak{g}-1$, so when the Killing vector field $X$ is chosen from
$\mathfrak{g}\backslash \cup_{1\leq i\leq m'}\mathfrak{g}_i$,
it does not vanish on any closed geodesic.
The zero set of $X$ can not be a two-points $\psi$-orbit (case (1) of Lemma \ref{lemma-3-4}), because it must have an even codimension in $M$. It can not have positive dimension either,
because otherwise $X$ vanishes on some closed geodesics. So the
cases (2) and (3) of Lemma \ref{lemma-3-4} do not happen.
To summarize, we see a generic Killing vector field $X$ is nonvanishing everywhere, which proves Claim 5.

Thirdly, we choose a generic Killing vector field $X\in\mathfrak{g}\backslash
\cup_{1\leq i\leq m'}\mathfrak{g}_i$ which generates a closed subgroup $T=S^1$ in $I_0(M,F)$. We will discuss the critical submanifolds of $f(\cdot)=F(X(\cdot))^2$, their nullities and indices.

The function $f(\cdot)$ is $T$-invariant. So its critical sub-manifold consists of $T$-orbits. Notice that by Claim 4,
all closed geodesics are reversible. So
by Lemma \ref{lemma-2-1}
and Assumption (F), we have a one-to-one correspondence between connected critical sub-manifold of $f(\cdot)$ and all the geometrically distinct closed geodesic.

Now we calculate the nullity and index for $f(x)$ at each closed geodesic.

Let $c(t)$ with $t\in[0,t_0]$ and $c(0)=c(t_0)$ be a prime closed geodesic with unit speed. Without loss of generalities, we may normalize $X$ such that $X(c(t))=\dot{c}(t)$.

To avoid unnecessary complexity, in the discussion below
we will replace the metric $F$ by its osculation metric $g^F_X=\langle\cdot,\cdot\rangle$.
Because $X$ is nonvanishing everywhere, the Riemannian metric $g^F_X$ is globally defined. Denote $R(\cdot,\cdot)$ and
$\nabla$ the curvature operator and the Lev-Civita connection for $(M,g^F_X)$.

The action of $I_0(M,F)$ on $M$ preserves $F$ and $X$, so it makes $I_0(M,F)$
a subgroup of $I_0(M,g_X^F)$.
Notice that $$f(\cdot)=F(X(\cdot))^2=\langle X(\cdot),X(\cdot)\rangle.$$ So $c(t)$ is geodesic for $(M,g_X^F)$ because it is a critical sub-manifold for
$f(\cdot)$. For any $x=c(t)$ on this geodesic, and any tangent plane $\mathbf{P}$ containing $y=X(x)=\dot{c}(t)$, we have
$K^{g^F_X}(x,\mathbf{P})=K^F(x,y,\mathbf{P})=1$ by Theorem \ref{thm-geodesic-field}.

Denote $x=c(0)=c(t_0)$, $\mathbf{V}$ the orthogonal complement of $y=\dot{c}(0)$ in $T_xM$, and $G'$ the closed subgroup of
$I_0(M,F)$ fixing $x$. We can find an orthonormal basis $\{e^i,1\leq i\leq 2n'\}$ for $\mathbf{V}$, such that the $G'$-actions on $\mathbf{V}$ are of the form
\begin{equation}\label{005}
\mathrm{diag}(R(t_1),\ldots,R(t_{n'})),
\end{equation}
in which $$R(t_i)=\left(
              \begin{array}{cc}
                \cos t_i & \sin t_i \\
                -\sin t_i & \cos t_i \\
              \end{array}
            \right).$$

The parallel movement along $c(t)$ from $t=0$ to $t=t_0$ defines an orientation preserving orthogonal linear map $L$ on $\mathbf{V}$, which commutes with all the linear maps in (\ref{005}). So we can present $L$ as $$L=\mathrm{diag}(R(\alpha_1),\ldots,R(\alpha_{n'})).$$
Similarly, the tangent map of the action of any $\varphi\in I_0(M,F)$ at $x$, restricted to $\mathbf{V}\subset T_xM$, is of the form
$$P_{s_{0}}\circ
\mathrm{diag}(R(s_1),\ldots,R({s_{n'}})),$$
where $P_{s_0}$ is the parallel movement along $c(t)$ from $t=0$ to $t=s_0$.

In particular when $\varphi=\exp tX$ in which the Killing vector field $X$ satisfies $X(c(t))=\dot{c}(t)$, we have
$$s_0=t,\mbox{ and }s_i=t\theta_i,\quad\forall 1\leq i\leq n'.$$

For each nonzero vector $v\in \mathbf{V}$, we can find constant speed geodesic $\gamma'$ initiating from $x$ in the direction of $v$. The $T$-action on $\gamma'$ defines a local variation $c(s,t)$ of $c(t)$, i.e. when $s$ and $t$ are sufficiently close to 0,
$c(s,t)\in M$ is smoothly well defined, $c(0,t)=c(t)$,
$c_t(s,t)=X$ and $c_s(0,0)=v$ and $c_s(s,0)$ is the geodesic $\gamma'$.
Because $X$ is a Killing vector field, each integration curve of $V=c_s(s,t)$ is a geodesic, restricted to which $X$ is a Jacobi field. Also notice $[X,V]=0$, so we have
\begin{eqnarray}\label{003}
\frac{1}{2}\frac{\partial^2}{\partial s^2}\langle X,X\rangle
&=&V\langle\nabla_VX,X\rangle\nonumber\\
&=&\langle\nabla_V\nabla_VX,X\rangle+
\langle\nabla_VX,\nabla_VX\rangle\nonumber\\
&=&-\langle R(X,V)V,X\rangle+\langle\nabla_XV,\nabla_XV\rangle.
\end{eqnarray}

When $s=0$, the first term at the right side of (\ref{003}) is $$-\langle R(X,V)V,X\rangle=-\langle V,V\rangle.$$

To calculate the second term when $s=0$, we
use the orthonormal basis $\{e^i,1\leq i\leq 2n'\}$ given above,
$v=v_ie^i$ can be presented as a column vector and
$$V(c(t))=c_s(s,t)|_{s=0}
=T_t(\mathrm{diag}(R(t\theta_1),\ldots,R(t\theta_{n'}))v).$$
So when $s=t=0$ we have
$$\nabla_XV=\mathrm{diag}(\theta_1 J,\ldots,\theta_{n'}J)v,$$
in which $$J=\left(
              \begin{array}{cc}
                0 & 1 \\
                -1 & 0 \\
              \end{array}
            \right).$$
To summarize, we get
$$\frac{1}{2}
\frac{\partial^2}{\partial s^2}\langle X,X\rangle|_{s=t=0}
=\sum_{i=1}^{n'}(\theta_{i}^2-1)(v_{2i-1}^2+v_{2i}^2),
$$
with $v=v_ie^i$.
Now it is easy to see
the nullity and index of the critical submanifold $c=c(t)$ for $f(\cdot)=\langle X(\cdot),X(\cdot)\rangle$
is two times the number of $\theta_i$'s satisfying $|\theta_i|=1$ and
$|\theta_i|<1$ respectively. In particular the index is an even number.

Using the parallel movement, each pair $e^{2i-1}$ and $e^{2i}$ generate a two dimension trivial bundle
on the geodesic $c(t)$. The negative bundle on the geodesic $c(t)$ is generated by those $e^{2i-1}$ and $e^{2i}$ with $|\theta_i|<1$, i.e. a sum of two dimensional trivial bundles, so it is a trivial bundle as well.

In above discussion, we normalized $X$ such that $X(c(t))=\dot{c}(t)$ has the unit length. Back to a general generic $X$, each condition $|\theta_i|=1$ corresponds to two
co-dimensional one subspaces in $\mathfrak{g}=\mathrm{Lie}(I(M,F))$. From the complement of
the union of all these subspaces for all $1\leq i\leq n'$ and
all prime closed geodesics, we can still find $X$ from the dense subset of $\mathfrak{g}$ of Killing vectors which generate an $S^1$-subgroup.

All above discussion can be summarized as the following lemma.
\begin{lemma}\label{lemma-5-1}
Let $(M,F)=(S^n,F)$ be an odd dimensional Finsler sphere with $n\geq 3$, $K\equiv 1$, only finite prime closed geodesics, and $\dim I(M,F)=\frac{n+1}{2}$. Then there exists a Killing vector field
$X$ satisfying the following conditions:
\begin{description}
\item{\rm (1)} The Killing vector field $X$ is nonvanishing everywhere and generates
a subgroup $T=S^1$ in $I(M,F)$.
\item{\rm (2)} The function $f(\cdot)=F(X(\cdot))^2$ is a positive smooth function on $M$ and the critical sub-manifolds are all the closed geodesics.
\item{\rm (3)} The function $f(\cdot)$ is a $T$-equivariant
Morse function, i.e. for each prime closed geodesic viewed as a critical sub-manifold of $f(\cdot)$, the nullity is zero.
\item{\rm (4)} The index for each closed geodesic is even, and the negative bundle is a trivial.
\end{description}
\end{lemma}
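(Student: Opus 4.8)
The plan is to produce the desired Killing field $X$ by a genericity argument inside $\mathfrak{g}=\mathrm{Lie}(I(M,F))$ and then verify the four properties in two groups: the global properties (1)--(2), which follow from nonvanishing together with Lemma \ref{lemma-2-1}, and the local properties (3)--(4), which follow from a second-variation computation at each closed geodesic.

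First I would establish (1) and (2). Let $\gamma_1,\ldots,\gamma_{m'}$ be the geometrically distinct closed geodesics and let $\mathfrak{g}_i\subset\mathfrak{g}$ be the Killing fields vanishing on $\gamma_i$; as in Claim 4 each $\mathfrak{g}_i$ has codimension one. Since $\dim\mathfrak{g}=n'+1\geq 2$, the dense set of elements generating a circle subgroup is not covered by the finite union $\bigcup_i\mathfrak{g}_i$, so I can pick $X$ generating $T=S^1$ that vanishes on no closed geodesic. The zero set of such $X$ is a fixed-point set, so by Lemma \ref{lemma-3-4} it is a two-point $\psi$-orbit, a reversible geodesic, or a lower-dimensional Finsler sphere; the latter two would force $X$ to vanish on a closed geodesic, while the first has odd codimension $n$ and hence cannot be the zero set of a Killing field, whose normal space carries a fixed-point-free linear torus action of even dimension. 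Thus $X$ is nonvanishing, $f(\cdot)=F(X(\cdot))^2$ is positive and smooth, and by Lemma \ref{lemma-2-1} its critical set is exactly the union of the closed geodesics, each a single $T$-orbit since all geodesics are reversible by Claim 4.

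The core is (3) and (4), for which I would compute the Hessian of $f$ transverse to a fixed prime closed geodesic $c(t)$. Normalizing $X$ so that $X(c(t))=\dot c(t)$, I pass to the osculation Riemannian metric $g^F_X$; by Theorem \ref{thm-geodesic-field} its sectional curvature in every plane containing $\dot c$ equals $1$, and $c$ is a $g^F_X$-geodesic. On the normal space $\mathbf{V}$ at $x=c(0)$ I diagonalize the stabilizer-torus action into rotation blocks, observe that parallel transport $L$ around $c$ commutes with them and so is simultaneously block-diagonal, and record the rotation speeds $\theta_i$. Taking a normal variation $c(s,t)$ whose transverse field $V=c_s$ is Jacobi (using that $X$ is Killing and $[X,V]=0$), the identity $\tfrac12\partial_s^2\langle X,X\rangle=-\langle R(X,V)V,X\rangle+\langle\nabla_X V,\nabla_X V\rangle$ evaluates at the base point to $\sum_{i=1}^{n'}(\theta_i^2-1)(v_{2i-1}^2+v_{2i}^2)$. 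Hence the nullity is twice the number of indices with $|\theta_i|=1$ and the index twice the number with $|\theta_i|<1$; in particular the index is even, and since the negative directions pair into $L$-invariant $2$-planes, parallel transport trivializes the negative bundle.

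The main obstacle is the simultaneous genericity: each relation $|\theta_i|=1$ cuts out a pair of codimension-one subspaces of $\mathfrak{g}$, and I must choose $X$ outside the finite union of all such subspaces over every geodesic and every $i$ while still landing on an $S^1$-generator. Because these are proper subspaces of a torus Lie algebra of dimension at least two, their complement is open, dense, and still meets the dense set of circle generators, so a valid $X$ exists; confirming this density, together with carefully justifying that $V$ is Jacobi and that the cross term reduces to the stated diagonal quadratic form, is where the real work lies.
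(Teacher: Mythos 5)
Your plan follows the paper's proof essentially step for step: the same genericity argument in $\mathfrak{g}$ (choosing $X$ outside the finite union of the codimension-one subspaces $\mathfrak{g}_i$ and of the loci where some $|\theta_i|=1$, with the even-codimension parity argument ruling out a two-point zero set), the same passage to the osculating metric $g^F_X$ and Theorem \ref{thm-geodesic-field}, the same rotation-block computation giving index $=2\#\{i:|\theta_i|<1\}$ and a parallel-transport-trivialized negative bundle. The one quibble is a mislabeling in the second-variation step: in the identity you quote, it is $X$ that is the Jacobi field along the geodesics $s\mapsto c(s,t)$ (whose velocity field is $V=c_s$), not $V$ that is Jacobi along $c$; the formula itself is correct and is exactly the paper's equation for $\tfrac12\partial_s^2\langle X,X\rangle$.
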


Finally, we fix a Killing vector field $X$ as indicated in
Lemma \ref{lemma-5-1}, and use the equivariant Morse theory to
finish the proof of Theorem \ref{main-thm-2} when $\dim M=n$ is odd.

Let $T=S^1$ be the subgroup that $X$ generates in $I(M,F)$.
For the $T$-action on $M=S^{n}$, we have the following $T$-equivariant cohomology groups with $\mathbb{R}$-coefficient.

Denote $BT$ be the classification space for principal $T$-bundles and $ET$ the universal principal $T$-bundle over $BT$. Define $M_T=M\times ET/T$ in which $T$ acts diagonally.
Then $H^*(M_T;\mathbb{R})$ is the $T$-equivariant cohomology
ring with $\mathbb{R}$-coefficient, denoted as
$H^*_T(M;\mathbb{R})$. The formal series
$$P_T(M;\mathbb{R},t)=\sum_{i=0}^\infty t^i\dim H^i_T(M;\mathbb{R})$$
is called the $T$-equivariant Poincare polynomial.
From the Serre-Leray spectral sequence, we see it is well defined, i.e.
$\dim H^i_T(M;\mathbb{R})<+\infty$ for each $i$.

Let $\gamma_i$ with $1\leq i\leq m$ be all the geometrically
distinct closed geodesics. Assume the isotropy group for the $T$-actions on each $\gamma_i$ is $\mathbb{Z}_{k_i}$. Each $\gamma_i$ is a non-degenerate critical submanifold for $f(\cdot)=F(X(\cdot))^2$ which index $\lambda_i$ is even. These data about critical sub-manifolds bring us another polynomial, i.e.
\begin{eqnarray}\label{004}
\mathcal{M}_G(M;f,\mathbb{R},t)&=&\sum_{i=1}^{m}
t^{\lambda_i}P_T(\gamma_i;\mathbb{R},t)\nonumber\\
&=&\sum_{i=1}^{m}
t^{\lambda_i}\sum_{j=0}^\infty t^j\dim H^j(B\mathbb{Z}_{k_i};\mathbb{R})\nonumber\\
&=&\sum_{i=1}^m t^{\lambda_i}.
\end{eqnarray}

Notice that in (\ref{004}), we need to use a $T$-equivariant
Thom isomorphism which requires an orientability of the negative bundle. This can be guaranteed by Lemma \ref{lemma-5-1} (4), i.e. the negative bundle is trivial, and the connectedness of $T=S^1$.

The $T$-equivariant Morse theory \cite{Bo1954}\cite{Bo1982}\cite{Hi1984} tells us
\begin{equation}
\sum_{i=1}^m t^{\lambda_i}=P_T(M;\mathbb{R},t)+(1+t)Q(t),\label{001}
\end{equation}
in which $Q(t)$ is a formal power series with non-negative integer coefficients. Because the left side of (\ref{001})
is a polynomial of $t^2$, so does $P_T(M;\mathbb{R},t)$ and
$Q(t)\equiv0$.
The number $m$ of all geometrically distinct closed geodesics
coincides with the $T$-equivariant Euler number
$\chi_T(M;\mathbb{R})=P_T(M;\mathbb{R},-1)$.

The $T$-equivariant Euler number $\chi_T(M;\mathbb{R})$ can be calculated as following. Because $M_T$ is a fiber bundle over $BT$ with fiber $M$, so we have the
Leray-Serre spectral sequence \cite{BT1982} for $M_T$, i.e.
\begin{eqnarray*}
E_2^{p,q}&=&H^p(BT;H^q(S^n,\mathbb{R}))\\
&=&H^p(BT;\mathbb{R})\times H^q(S^n;\mathbb{R})
\Rightarrow H^{r}(M_T;\mathbb{R})=H^r_T(M;\mathbb{R}).
\end{eqnarray*}
All the nonzero $E_2^{*,*}$-terms are $E_2^{p,q}=\mathbb{R}$ when $p$ is a non-negative even number and $q=0$ or $n$, so we have
\begin{eqnarray*}
& &E_2^{*,*}\cong E_3^{*,*}\cong\cdots\cong E_{n+1}^{*,*},\mbox{ and}\\
& &E_{n+2}^{*,*}\cong E_{n+2}^{*,*}\cong\cdots\cong
E_\infty^{*,*}.
\end{eqnarray*}
The differential map
$$d_{n+1}:\mathbb{R}= E_{n}^{2k,n}\rightarrow E_{n}^{2k+n+1,0}=\mathbb{R}$$
must be an isomorphism for each $k\geq 0$, because otherwise
$$H_T^k(M_T;\mathbb{R})\cong\oplus_{p+q=k}E_{\infty}^{p,q}
=\oplus_{p+q=k}E_{n+2}^{p,q}\neq 0$$ for some odd $k$,
which is a contradiction to the fact that $P_T(M;\mathbb{R},t)$
is a polynomial of $t^2$.

To summarize, we get $$H^{2k}_T(M;\mathbb{R})\cong E_{\infty}^{2k,0}=\mathbb{R}$$
when $0\leq k\leq[n/2]$, and all other $E_{\infty}^{p,q}=0$ and
$H_T^r(M;\mathbb{R})$ vanish. So
$$m=\chi_T(M;\mathbb{R})=\frac{n+1}{2},$$
which ends the proof of Theorem \ref{main-thm-2} when $n$ is odd.

\subsection{The case that $\dim M$ is even}

Now we prove Theorem \ref{main-thm-2} with the assumption $\dim M=n=2n'$ is even. The case $n'=1$ has been proved by Theorem 2 of \cite{BFIMZ2017}. So we can assume $n'>1$. In this case $\dim I(M,F)=n'$. Notice that the
 antipodal map $\psi$ reverses the orientation.

As before, we denote all the geometrically distinct prime closed geodesics as $\gamma_i$, $1\leq i\leq m$,
and $\mathfrak{g}_i$ the space of all Killing vector fields
which vanishes on $\gamma_i$. Notice that $m\geq n'$ by Lemma \ref{lemma-4-1}. By Lemma \ref{lemma-3-6}, we can re-order those geodesics,
such that $\cap_{1\leq i\leq n'}\mathfrak{g}_i=0$. Then the action of $I_0(M,F)$ on each geodesic $\gamma_i$ with $1\leq i\leq n'$ induces a diagonal action on
$$\gamma_1\times\cdots\times\gamma_{n'}\subset
M\times\cdots\times M,$$
which is transitive. So we can find an isometry of the form
$\phi=\psi\phi_0$ with $\phi_0\in I_0(M,F)$ such that $\phi$
acts trivially on each $\gamma_i$ for $1\leq i\leq n'$. Because $\phi$ reverses the orientation, its fixed point set $N$ is an odd
dimensional Finsler sphere with $\dim N\geq 3$, $K\equiv 1$ and only finite prime closed geodesics.

Because $\phi$ commutes with $I_0(M,F)$, the action $I_0(M,F)$ preserves the fixed point set $N$ of $\phi$.
The restriction to $N$ is an
injective linear map from $\mathrm{Lie}(I(M,F))$ to
$\mathrm{Lie}(I(N,F|_N))$, so
$\dim I(N,F|_N)\geq n'$.
On the other hand,
$\dim N\leq 2n'-1$.
By Theorem \ref{main-thm-0}, $\dim I(N,F|_N)\leq [\frac{\dim N+1}{2}]\leq n'$.
To summarize, we get
$$\dim N=2n'-1\mbox{ and }\dim I(N,F|_{F|_N})=n'.$$
 By the proof in Subsection 5.1, we see $\gamma_i$'s with $1\leq i\leq n'$ are exactly all the geometric distinct
closed geodesics on $N$, and they are all reversible. We just need to prove that
there exist no more closed geodesics on $M$.

Assume conversely that there exists a closed geodesic
$\gamma$ which is
geometrically different with all $\gamma_i$ with $1\leq i\leq n'$. By Lemma \ref{lemma-3-3}, $\gamma$ intersects with $N$.
The action of $I_0(M,F)$ preserves both $\gamma$ and $N$, so $\gamma$ is contained in the fixed point of $I_0(M,F)$.
By Lemma \ref{lemma-3-7},
$\gamma$ is the fixed point set of some
$\psi^k$.

If $N\cap\gamma$ contains more than two points, by Lemma \ref{lemma-3-5}, the fixed point set of $\psi^k$ in $N$ is another closed geodesic $\gamma'$. The two geometrically distinct closed geodesics
$\gamma$ and $\gamma'$ intersect. This is a contradiction
to Lemma \ref{lemma-3-8}.

So $N\cap\gamma$ is a two-points $\psi$-orbit. Using Lemma
\ref{lemma-3-5} again, we see $\gamma$ is the fixed point set
of $\psi^2$ in $M$. But $\dim M$ is even and $\psi^2$ is an orientation preserving isometry, so the fixed point set of $\psi^2$ must have an even dimension.
This is a contradiction which ends the proof of Theorem \ref{main-thm-2}.


\begin{thebibliography}{99}
\bibitem{An1975} D. V. Anosov, Geodesics in Finsler geometry, in: Proc. I. C. M., Vancouver, BC 1974, Montreal, {\bf 2}
    (1975), 293-297 (in Russian), Amer. Math. Soc. Transl.
    {\bf 109} (1977), 81-85.
 \bibitem{Bo1954} R. Bott, Non-degenerate critical manifolds, Ann. Math. {\bf 60} (1954), 284-261.
 \bibitem{Bo1982} R. Bott, Lectures on Morse theory, old and new, Bull. A. M. S. {\bf 7} (2) (1982), 331-358.
 \bibitem{Br1996} R. L. Bryant, Finsler structures on the 2-sphere satisfying $K=1$, Cont. Math. {\bf 196} (1996),
     27-41.
 \bibitem{Br1997} R. L. Bryant, Projectively flat Finsler 2-spheres of constant curvature, Sel. Math. {\bf 3} (1997), 161-203.
 \bibitem{Br2002} R. L. Bryant, Some remarks on Finsler manifolds with constant flag curvature, Houston J. Math.
     {\bf 28} (2002), 221-262.
 \bibitem{BCS2000} D. Bao, S. S. Chern and Z. Shen, An Introduction to Riemann-Finsler geometry, Springer-Verlag, New York, 2000.
 \bibitem{BFIMZ2017} R. L. Bryant, P. Foulon, S. Ivanov, V. S. Matveev and W. Ziller, Geodesic behavior for Finsler metrics of constant positive flag curvature on $S^2$, preprint.
 \bibitem{BHM2017} R. L. Bryant, L. Huang and X. Mo, On Finsler surfaces of constant curvature with a Killing field, J. Geom. Phy. {\bf 116} (2017), 345-357.
 \bibitem{BL2010} V. Bangert and Y. Long, The existence of two
 closed geodesics on every Finsler $n$-sphere, Math. Ann.
 {\bf 346} (2) (2010), 335-366.
\bibitem{BRS2004} D. Bao, C. Robles and Z. Shen, Zermelo navigation on Riemannian manifolds, J. Diff. Geom.
     {\bf 66} (2004), 377-425.
\bibitem{BT1982} R. Bott and L. W. Tu, Differential Forms in Algebraic Topology, G.T.M. {\bf 82}, Springer-Verlag, New York, 1982.
\bibitem{DX2004} S. Deng and M. Xu, Clifford-Wolf translations of Finsler spaces, Forum Math. {\bf 26} (2014), 1413-1428.
\bibitem{Du2016} H. Duan, Two elliptic closed geodesics on
 positively curved Finsler spheres, J. Diff. Equations,
 {\bf 260} (12) (2016), 8388-8402.
\bibitem{DH2002} S. Deng and Z. Hou, The group of isometries of
a Finsler space, Pacific J. Math. {\bf 207} (2002), 149-157.
 \bibitem{DL2009} H. Duan and Y. Long, Multiple closed geodesics on 3-spheres, Adv. Math. {\bf 221} (2009),
     1757-1803.
 \bibitem{HM2015} L. Huang and X. Mo, On the flag curvature of
 a class of Finsler metrics produced by the navigation problem,
 Pacific J. Math. {\bf 277} (2015), 149-168.
 \bibitem{Ka1973} A. B. Katok, Ergodic properties of degenerate
 integrable Hamiltonian systems, Izv. Akad. Nauk SSSR. {\bf 37}, 539-576, English translation in Math. USSR-Isv. {\bf 7}
 (1973), 535-571.
 \bibitem{Hi1984} N. Hingston, Equivariant Morse theory and closed geodesics, J. Diff. Geom. {\bf 19} (1984), 85-116.
 \bibitem{Ra1989} H. B. Rademacher, On the average indices of
 closed geodesics, J. Diff. Geom. {\bf 29} (1989), 65-83.
 \bibitem{Sh1997} Z. Shen, Finsler manifolds of constant positive curvature, in: Finsler Geometry, Contemporary Math. {\bf 196} (1996), 83-92.
 \bibitem{Sh2001} Z. Shen, Lectures on Finsler geometry, World Scientific, 2001.
 \bibitem{Wa2012} W. Wang, On a conjecture of Anosov, Adv. Math. {\bf 230} (2012), 1597-1617.
 \bibitem{Wa2015} W. Wang, Non-hyperbolic closed geodesics on
 Finsler spheres, J. Diff. Geom. {\bf 99} (3) (2015), 473-496.
 \bibitem{XD2017} M. Xu and S. Deng, Normal homogeneous Finsler
 spaces, Transform. Groups {\bf 22} (4) (2017), 1143-1183.
 \bibitem{XDHH2017} M. Xu, S. Deng, L. Huang and Z. Hu, Even-dimensional homogeneous Finsler spaces with positive flag curvature, Indiana Univ. Math. J. {\bf 66} (3) (2017), 949-972.
\end{thebibliography}
\end{document}